\theoremstyle{plain}
\newtheorem{theorem}{Theorem}[section]
\newtheorem{lemma}[theorem]{Lemma}
\newtheorem{proposition}[theorem]{Proposition}
\theoremstyle{definition}
\theoremstyle{remark}
\newtheorem{remark}{Remark}
\newcommand{\R}{{\mathbb R}}
\newcommand{\ee}{{\rm e}}
\newcommand{\e}{\varepsilon}
\newcommand{\la}{\langle}
\newcommand{\ra}{\rangle}
\DeclareMathOperator{\argmin}{argmin}
\def\vp{\varphi}
\newcommand{\norm}[1]{\left\lVert#1\right\rVert}
\def\one{{\boldsymbol{1}_n}}
\def\po#1{{\color{black}#1}} 
\def\cu#1{{\color{black}#1}} 
\begin{document}


\title{Near-optimal tensor methods for minimizing the gradient norm of convex functions and accelerated primal-dual tensor methods}

\author{
    \name{
        Pavel Dvurechensky\textsuperscript{a}\thanks{CONTACT: Petr Ostroukhov: ostroukhov@phystech.edu, Pavel Dvurechensky: pavel.dvurechensky@wias-berlin.de},
        Petr Ostroukhov\textsuperscript{b,c,d},
         Alexander Gasnikov\textsuperscript{b,e,c},
        C\'esar A. Uribe\textsuperscript{f},
        Anastasiya Ivanova\textsuperscript{g,h}
    }
    \affil{
        \textsuperscript{a} Weierstrass Institute for Applied Analysis and Stochastics, Berlin, Germany;
        \textsuperscript{b} Moscow Institute of Physics and Technology, Dolgoprudny, Russia;
        \textsuperscript{c} Institute for Information Transmission Problems RAS, Moscow, Russia;
        \textsuperscript{d} Mohamed bin Zayed University of Artificial Intelligence, Abu-Dhabi, UAE;
        \textsuperscript{e} Skoltech, Moscow, Russia;
        \textsuperscript{f} Department of Electrical and Computer Engineering, Rice University, Houston TX, USA;
        \textsuperscript{g} HSE University, Moscow, Russian Federation;
        \textsuperscript{h} Univ. Grenoble Alpes, LJK, 38000 Grenoble, France
    }
}

\maketitle

\begin{abstract}
    \po{
    Motivated, in particular, by the entropy-regularized optimal transport problem, we consider convex optimization problems with linear equality constraints, where the dual objective has Lipschitz $p$-th order derivatives, and develop two approaches for solving such problems.
    The first approach is based on the minimization of the norm of the gradient in the dual problem and then the reconstruction of an approximate primal solution.   
    Recently, Grapiglia and Nesterov~\cite{grapiglia2020tensor} showed lower complexity bounds for the problem of minimizing the gradient norm of the function with Lipschitz~$p$-th order derivatives. Still, the question of optimal or near-optimal methods remained open as the algorithms presented in~\cite{grapiglia2020tensor} achieve suboptimal bounds only.
    We close this gap by proposing two near-optimal (up to logarithmic factors) methods with complexity bounds~$\tilde{O}(\e^{-2(p+1)/(3p+1)})$ and~$\tilde{O}(\e^{-2/(3p+1)})$ with respect to the initial objective residual and the distance between the starting point and solution respectively. 
    We then apply these results (having independent interest) to our primal-dual setting.
    As the second approach, we propose a direct accelerated primal-dual tensor method for convex problems with linear equality constraints, where the dual objective has Lipschitz~$p$-th order derivatives. 
    For this algorithm, we prove $\tilde O (\e^{-1 / (p + 1)})$ complexity in terms of the duality gap and the residual in the constraints.
    We illustrate the practical performance of the proposed algorithms in experiments on logistic regression, entropy-regularized optimal transport problem, and the minimal mutual information problem.
    }

\end{abstract}

\begin{keywords}
    Tensor methods; gradient norm; nearly optimal methods; optimal transport; primal-dual methods
\end{keywords}

\section{Introduction}\label{S:Intro}

    \cu{The idea of using higher-order derivatives in optimization methods has been known since the 1970's~\cite{hoffmann1978higher-order}, with increased interests recently~\cite{baes2009estimate,wibisono2016variational,birgin2017worst-case,cartis2017improved,agarwal2018lower,arjevani2018oracle}. Using high-order oracles has been shown to have better oracle complexities provably. However, their main practical bottleneck was the requirement of solving an auxiliary problem at each iteration that involved minimizing a regularized Taylor expansion of the objective, which in general, is a non-convex problem. Nesterov in~\cite{nesterov2021implementable} showed that an appropriate regularization makes the auxiliary problem convex. Moreover, he proposed an efficient method for solving the corresponding subproblem for the third-order method. This motivated a resurgence of research that introduced high-order (also referred to as tensor) methods for convex~\cite{gasnikov2019optimal,pmlr-v99-gasnikov19b,wilson2019accelerating,hinder2020near,bullins2019higher,bullins2018fast, nesterov2021inexact1, nesterov2021inexact2, kamzolov2020near} and non-convex settings~\cite{birgin2017worst-case,cartis2017improved}.}

    \po{
    In this paper, we consider a convex optimization problem with linear equality constraints of the form
    \begin{equation}
        \min_{\po{Ax = b}} f(x) \label{eq:PrSt},
    \end{equation}
    where~$f\po{: \R^n \to \R}$ is a convex function, \po{ $A \in \R^{m \times n}$, $b \in \R^m$}, and its dual
    \begin{equation}\label{eq:DualSt}
       \max_{\lambda \in \R^m} \left\{\vp(\lambda) := \la \lambda, b \ra + \max_{x\in Q} \left(- f(x) - \la A^\intercal \lambda,x \ra \right) \right\},
    \end{equation}
    where we assume that $\varphi$ has Lipschitz-continuous $p$-th derivative with constant~$M_p$.
    }
    

    \po{One way to tackle this problem is by minimizing the gradient norm of the dual function~\cite{nesterov2012make}.}
    In this case, the dual problem \eqref{eq:DualSt} is unconstrained, the gradient norm of the dual objective is equal to the primal constraints residual, and finding an~$\e$-stationary point for the dual problem allows one to find an approximate solution to the primal problem, see the details in Section~\ref{sec:motivation}. 

    Recently in~\cite{grapiglia2020tensor}, the authors considered a more general setting of composite convex optimization problems with H\"older-continuous higher-order derivatives and proposed a set of methods for finding approximate stationary points. As a particular case of~\cite[Corollary 5.8]{grapiglia2020tensor}, it follows that to find an~$\e$-stationary point~$\bar x$ such that $\|\nabla f(\bar x)\|_2 \leq \e$, their proposed method requires $$O \big( \big( {M_p R^p}/{\e} \big)^{{1}/{(p + 1)}}\big)$$  iterations with~$R$ being an estimate for the initial distance to the solution, i.e., ~$\|x_0-x^*\|_2 \leq R$. Moreover, as a particular case of~\cite[Corollary 5.10]{grapiglia2020tensor}, it follows that to find an \mbox{$\e$-stationary} point, their proposed method needs $$\tilde O  \big(  \big( {M_p \Delta_0^{p}}/{\e^{p + 1}} \big)^{{1}/{(p + 1)}}\big)$$ iterations with~$\Delta_0$ being an estimate for the initial objective residual,  i.e.,~$f(x_0)-f^* \leq \Delta_0$. However, \textit{these complexity bounds do not match the corresponding lower bounds obtained in~\cite[Theorem 6.6]{grapiglia2020tensor} and~\cite[Theorem 6.8]{grapiglia2020tensor} respectively,} where the number of iterations required to find an~$\e$-stationary point is of the order, respectively, 
   $$\Omega\big( \big( {M_p R^p}/{\e} \big)^{{2}/{(3p + 1)}}\big) \qquad \text{and} \qquad \Omega\big( \big( {M_p \Delta_0^p}/{\e^{p + 1}} \big)^{{2}/{(3p + 1)}}\big).$$
    
    \po{In~\cite{gasnikov2019optimal,bubeck2019near,pmlr-v99-gasnikov19b}, the authors considered finding~$\e$-approximate solution~$\bar x$ in terms of the objective residual, i.e., such that~$f(\bar x) - f^* \leq \e$. They proposed a class of near-optimal methods up to a logarithmic factor for \po{unconstrained minimization problem} in the general convex setting and under the additional assumption of uniform convexity.}
    \cu{We build upon the algorithm, developed in~\cite{bubeck2019near}, and propose methods for finding an approximate stationary point with near-optimal (up to a logarithmic multiplier) oracle complexities~\cite{grapiglia2020tensor}, see Table~\ref{tab:summary}.}

        \begin{table}[H]
        \tbl{Complexity of minimizing the gradient norm from~\cite{grapiglia2020tensor} and from this paper ($\Omega$ means ``lower bound'').}
        {
        \renewcommand{\arraystretch}{2.2}
        \begin{tabular}{cccc}
        \hline
        \textbf{Property} & \textbf{Lower bound ~\cite{grapiglia2020tensor}} & \textbf{Upper Bound~\cite{grapiglia2020tensor}} & \textbf{Upper Bound (this paper)} \\ 
        \hline \hline
          ~$f(x_0) -f^*\leq \Delta_0$ &~$\Omega \Big( \frac{M_p \Delta_0^p}{\e^{p + 1}} \Big)^\frac{2}{3p + 1}$ &~$\tilde O \Big( \frac{M_p \Delta_0^{p}}{\e^{p + 1}} \Big)^{\frac{1}{p + 1}}$ &~$\tilde O \Big( \frac{M_p \Delta_0^p}{\e^{p + 1}} \Big)^\frac{2}{3p + 1}$ \\
          \hline
         ~$\|x_0-x^*\|_2\leq R$ &~$\Omega \Big( \frac{M_p R^p}{\e} \Big)^\frac{2}{3p + 1}$ &~$O \Big( \frac{M_p R^p}{\e} \Big)^{\frac{1}{p + 1}}$  &~$\tilde O \Big( \frac{M_p R^p}{\e} \Big)^\frac{2}{3p + 1}$ \\ 
        \hline
        \end{tabular}}
    	\label{tab:summary}    
    \end{table}

    

    \po{In addition, our methods can be used for strongly convex high-order smooth functions, and we provide complexity estimations for finding $\e$-approximate stationary point for this case.
    Moreover, we explain how our methods can be extended to obtain near-optimal methods for functions with H\"older-continuous high-order derivatives.
    }
     
    An alternative approach \po{to tackle Problem \eqref{eq:PrSt}}, widely used in first-order methods~\cite{tran-dinh2014constrained,tran-dinh2015smooth,yurtsever2015universal,alacaoglu2017smooth,tran-dinh2020adaptive,chernov2016fast,dvurechensky2016primal-dual,anikin2017dual,dvurechensky2018computational,lin2019efficient,guminov2019accelerated,nesterov2020primal-dual,2019arXiv191000152L,2020arXiv200204783L}, constructs so-called primal-dual methods in which the main iterates are made for the dual problem with the goal being to find an~$\e$-approximate solution to the dual problem. Then, the information generated while the dual algorithm works is used, e.g., by averaging, to reconstruct an~$\e$-approximate solution to the primal problem. Motivated by such methods in the first-order setting, we propose an accelerated primal-dual high-order method that guarantees~$O(1/k^{p+1})$ decay after~$k$ iterations both for the primal-dual gap and linear constraints infeasibility. To our knowledge, this is the first high-order primal-dual accelerated method. In particular, we are not aware of any primal-dual second-order methods.
    
    This paper is organized as follows. We start in Section~\ref{sec:motivation} with examples to motivate finding approximate stationary points of convex functions. We describe the entropy-regularized optimal transport problem and show that its structure provides a natural justification for tensor methods that exploit the high-order smoothness properties of the corresponding dual problem. Section~\ref{sec:prelim} recalls some auxiliary results used later to prove our main results. Section~\ref{sec:residual} presents the near-optimal algorithm for finding approximate stationary points for the initial objective residual; near-optimal complexity bounds are shown explicitly. Section~\ref{sec:argument} shows the corresponding near-optimal algorithm for the initial variable residual; near-optimal complexity bounds are also shown. 
    In Section~\ref{sec:primal_dual}, we propose and prove convergence rate guarantees of our accelerated primal-dual tensor method for problems with linear equality constraints. Section~\ref{sec:numerics} shows some numerical results on the proposed algorithms for the logistic regression problem, entropy-regularized optimal transport problem, and minimization of ``bad'' functions, which give the lower bounds for the considered problem class. 
    \po{We provide numerical comparisons of the proposed tensor method for gradient norm minimization and primal-dual accelerated tensor method on entropy-regularized optimal transport and minimal mutual information problems.
    In addition, we compare numerically proposed algorithms with heuristical primal-dual modification of algorithm from \cite{bubeck2019near}.}
    Finally, conclusions and future work are presented in Section~\ref{sec:conclusions}.
    
   \textbf{Notation:} Let~$p \geq 1$. We denote by 
       ~$
        \nabla^p f(x)[h_1,...,h_p] 
       ~$
        the directional derivative of function~$f$ at~$x$ along directions~$h _i\in \R^n$,~$i = 1,...,p$.~$\nabla^p f(x)[h_1,...,h_p]~$ is a symmetric~$p$-linear form and its norm is defined as 
        \[
        \|\nabla^p f(x)\|_2 = \max_{h_1,...,h_p \in \R^n} \{ \nabla^p f(x)[h_1,...,h_p] : \|h_i\|_2 \leq 1, i = 1,...,p\},
        \]
        or equivalently
        \[
        \|\nabla^p f(x)\|_2 = \max_{h \in \R^n} \{ |\nabla^p f(x)[h,...,h]| : \|h\|_2 \leq 1\}.
        \]
       
        We denote $\|\cdot\|_2$ as the standard Euclidean norm, but our algorithm and derivations can be generalized for the Euclidean norm given by a general positive semi-definite matrix~$B$. In what follows, we also use notation $\nabla^p f(x) [h]^p \equiv \nabla^p f(x)[h,...,h]$.        We consider convex,~$p$ times differentiable on~$\R$ functions  satisfying Lipschitz condition for~$p$-th derivative
        \begin{equation}\label{eq:p-lipschitz}
        \|\nabla^p f(x) - \nabla^p f(y)\|_2 \leq M_p \|x-y\|_2, \qquad x,y \in \R^n.
        \end{equation}
        
        Given a function~$f$, numbers~$p \geq 1$ and~$M \geq 0$, define
        \begin{align}
            \Phi_{x, p}(y) &\triangleq \sum_{i = 0}^p \frac{1}{i!} \nabla^i f(x)[y - x]^i, \notag \\
            \Omega_{x, p, M}(y) &\triangleq \Phi_{x, p}(y) + \frac{M}{(p + 1)!} \|y - x\|_2^{p + 1}, \label{eq:regularized taylor approximation} \\
            T_{p,M}^f \left( x \right)&\in \mbox{Arg}\mathop {\min }\limits_{y\in \R^n} \Omega_{x, p, M}(y) \label{eq:tensor step}.
        \end{align}
       The main reason for using such regularization in \eqref{eq:regularized taylor approximation} is that, generally speaking, Taylor approximation of a convex function can be non-convex, and such regularization makes it convex~\cite{nesterov2021implementable} for sufficiently large $M$.
        Thus, from~\eqref{eq:p-lipschitz} and Taylor's theorem, it can be shown~\cite[Eqs. (2.6) and (2.7)]{birgin2017worst-case}, that
        \begin{align}
            |f(y) - \Phi_{x, p}(y)| & \le \frac{M_p}{p!} \|y - x\|^{p + 1}_2, \label{eq:objective and Taylor approximation diff upper bound} \\
            \|\nabla f(y) - \nabla \Phi_{x, p}(y)\|_2 & \le \frac{M_p}{(p - 1)!} \|y - x\|^{p}_2. \label{eq:grad objective and grad Taylor approximation diff upper bound}
        \end{align}
        
\section{A motivating example: problems with linear constraints}\label{sec:motivation}

    Let us consider a convex optimization problem with linear constraints
    \begin{equation}
    \label{eq:PrStGen}
        \min_{x \in Q \subseteq E} \left\{ f(x) : Ax = b \right\},
    \end{equation}
    where~$E$ is a finite-dimensional real vector space,~$Q$ is a simple closed convex set,~$A$ is a given linear operator from~$E$ to some finite-dimensional real vector space~$H$,~$b \in H$ is given,~$f$ is a convex function on~$Q$ with respect to some chosen norm~$\|\cdot\|_E$ on~$E$. 
    
    The Lagrange dual problem for \eqref{eq:PrStGen}, written as a minimization problem, is
    \vspace{-1mm}
    \begin{equation}
    \label{eq:DualPr}
    \min_{\lambda \in H^*} \left\{ \vp(\lambda) := \la \lambda, b \ra + \max_{x\in Q} \left(- f(x) - \la A^\intercal \lambda,x \ra \right) \right\}.
    \end{equation}
    
    We assume that the dual objective is smooth. In this case, by the Demyanov-Danskin theorem~\cite{danskin2012theory},~$\nabla \vp(\lambda) = b - Ax(\lambda)$,
    where 
    \[
    x(\lambda) := \arg \po{\max}_{x \in Q} \left(- f(x) - \la A^\intercal \lambda,x \ra \right).
    \] 

    \cu{Having the dual formulation in~\eqref{eq:DualPr} at hand, the following proposition justifies why minimizing the norm of a gradient is useful in the convex setting. }
    
    \begin{proposition}[Lemma 1 in~\cite{gasnikov2016efficient}]\label{prop:dual gradient norm and primal solution}
    Let $\lambda \in H^*$ be such that $-\la \lambda, \nabla \vp(\lambda) \ra \leq \e_f$, and $\|\nabla \vp(\lambda)\|_H \leq \e_{eq}$. Then
    \begin{equation}\label{eq:e_f_e_eq-solution}
        f(x(\lambda)) - f^* \leq \e_f, \quad \|Ax(\lambda) - b\|_H\leq \e_{eq}. 
    \end{equation}
    \end{proposition}
    
    Proposition~\ref{prop:dual gradient norm and primal solution} implies that if there is a method for the dual Problem \eqref{eq:DualPr} and this method generates a bounded sequence of iterates~$\lambda_k$ and a point~$\lambda_k$ s.t., the gradient of the dual objective is small, then, using the relation~$x(\lambda_k)$ we can reconstruct an approximate solution to the primal problem. This is the general motivation for convex optimization methods for minimizing the objective gradient norm. Moreover, the complexity bound for the dual method directly translates to the complexity for solving the primal problem without any overhead.
    
    To further motivate the high-order methods to minimize the objective gradient norm, we present a particular example of a smooth dual objective with Lipschitz continuous high-order derivatives.
    This example is the Entropy-regularized optimal transport problem~\cite{cuturi2013sinkhorn,cuturi2016smoothed}. Next, we briefly describe the problem and the properties of the dual objective.
    
    Consider two histograms~$p,q \in \Sigma_n$ on a support of size~$n$, where~$\Sigma_n$ is the standard simplex. Also, consider a matrix~$M \in \mathbb{R}^{n \times n}_{+}$ which is symmetric and accounts for the ``cost'' of transportation such that~$M_{ij}$ is the cost of moving a unit of mass from bin~$i$ to bin~$j$ in the corresponding supports of distributions~$p$ and~$q$. For example, given support points~$(x_i)_{1\leq i \leq n}$ on the Euclidean space, one can consider~$M_{ij} = \|x_i- x_j\|_2^2$, which corresponds to 2-Wasserstein distance. 
    
    The entropy-regularized optimal transport problem is defined as:
    \begin{align}\label{eq:wass}
        W_{\gamma}(p,q) & \triangleq \min_{X \in U(p,q)} \{  \langle M, X \rangle - \gamma E(X)\},
    \end{align}
    where~$\langle M, X \rangle$ is the Frobenius dot-product,~$\gamma \geq 0$ is a regularization parameter,~$E(X) \triangleq - \sum_{i,j}X_{ij}\ln(X_{ij})$, and~$U$ is the transportation polytope defined as
    \begin{align*}
        U(p,q) \triangleq \{ X \in \mathbb{R}_{+}^{n\times n} \mid X \boldsymbol{1}_n = p, X^\intercal\boldsymbol{1}_n=q  \}.
    \end{align*}
    It is known that Problem~\eqref{eq:wass}  is strongly convex and admits a unique optimal solution~$X^*$~\cite{cuturi2016smoothed}. If~$\gamma=0$ and~$M_{ij} = \|x_i-x_j\|_2^r$, $W_{\gamma}(p,q)$ in \eqref{eq:wass} is known as the~$r$-th power of the~$r$-Wasserstein distance between~$p$ and~$q$. 
    
    A standard way to deal with~\eqref{eq:wass} is to write its dual as follows:  
    \begin{align}
        \min_{X \in U(p,q)} &\la M, X \ra + \gamma \la X, \ln X \ra  \notag \\
        &=\min_{X \in \Sigma_{n^2}} \la M, X \ra + \gamma \la X, \ln X \ra + \max_{\xi,\eta} \left\{\la \xi,p-X\one\ra + \la \eta, q -X^\intercal\one \ra \right\}  \notag \\
    &=\max_{\xi,\eta} \left\{  \la \xi,p \ra + \la \eta, q \ra  +  \min_{X \in \Sigma_{n^2}}\left\{\la M + \xi \one^\intercal + \one \mu^\intercal + \gamma  \ln X,  X \ra \right\}   \right\} \notag \\
    &=\max_{{\xi},{\eta}} - \gamma {\ln} \sum_{i,j=1}^n \exp \left(-\frac{1}{\gamma}(M_{ij}-{\xi_i}-{\eta_j}) \right) + \la{\xi},{p}\ra + \la {\eta}, {q}\ra. \label{eq:OT-PD} 
    \end{align}
    In this case, the explicit dependence of the primal solution from the dual variables is given by
    \begin{equation}
        X(\xi,\eta) = \frac{{\rm diag}\big(\ee^{\frac{\xi}{\gamma}}\big)\ee^{-\frac{M}{\gamma}}{\rm diag}\big(\ee^{\frac{\eta}{\gamma}}\big)}{\ee^{\frac{\xi}{\gamma}}\ee^{-\frac{M}{\gamma}}\ee^{\frac{\eta}{\gamma}}},
    \end{equation}
    where the function~$\ee(\cdot)$ indicates component-wise exponentiation of vectors and matrices, i.e., ~$[\ee(A)]_{ij} = \exp(A_{ij})$. Also, for a vector~$a$,~${\rm diag}(a)$ denotes a diagonal matrix with the vector~$a$ on the diagonal.
    We underline that as opposed to the standard dual problem derived in \cite{cuturi2013sinkhorn}, we consider~$X$ to lie not in~$\R^{n \times n}_+$, but rather in the standard simplex of the size~$n^2$, the latter being the corollary of the marginal constraints~$X \boldsymbol{1}_n = p$,~$X^\intercal\boldsymbol{1}_n=q$ since~$p,q \in \Sigma_n$. This allows us to obtain a high-order smooth dual objective with a softmax form, as we will show next. On the contrary, the dual problem in~\cite{cuturi2013sinkhorn} has a sum of exponents in the dual objective, meaning that the derivatives are not Lipschitz-continuous.
    
    To show the correspondence to a general primal and dual pair of Problems~\eqref{eq:PrStGen}--\eqref{eq:DualPr}, let us assume without loss of generality that~$E = \R^{n^2}$,~$\|\cdot\|_E = \|\cdot\|_1$, and variable~$x = {\rm vec}(X) \in \R^{n^2}$ to be the vector obtained from a matrix~$X$ by writing each column of~$X$ below the previous column. 
    For the dual space we consider~$H = \R^{2n}$,~$\| \cdot \|_H = \| \cdot \|_2$.
    Also we set~$f(x) = \la M,X\ra + \gamma \la X, \ln X\ra$,~$Q=\Sigma_{n^2}$,~$b^\intercal = (p^\intercal,q^\intercal)$,~$A:\R^{n^2}\to \R^{2n}$ defined by the identity~$(A\,{\rm vec}(X))^\intercal = ((X \boldsymbol{1}_n)^\intercal,(X^\intercal \boldsymbol{1}_n)^\intercal)$, and~$\lambda^\intercal = (\xi^\intercal,\eta^\intercal)$. 
    Note that the matrix~$A$ has the form
    \[
    A = \left(
    \begin{aligned}
    I_n \quad & I_n & I_n  \quad& ...\\
    \boldsymbol{1}_n^\intercal\quad & \boldsymbol{0}_n^\intercal \quad& \boldsymbol{0}_n^\intercal\quad & ...\\
    \boldsymbol{0}_n^\intercal\quad & \boldsymbol{1}_n^\intercal \quad& \boldsymbol{0}_n^\intercal\quad & ...\\
    ...\quad & ...\quad & ... \quad & ...\\
    \end{aligned}
    \right),
    \]
    where~$I_n$ is the identity matrix,~$\boldsymbol{0}_n^\intercal$ is the vector of all zeros.
    Using these notations, we can write the dual problem in \eqref{eq:OT-PD} as
    \begin{align}
       & \max_{\lambda} - \gamma {\ln} \sum_{i,j=1}^n \exp \left(-\frac{[M-A^\intercal \lambda]_{ij}}{\gamma} \right) + \la \lambda, b\ra  \notag \\
       &= \max_{\lambda} - \textbf{smax}_\gamma(A^\intercal\lambda-M) + \la \lambda, b\ra \\
       &= \min_{\lambda} \textbf{smax}_\gamma(A^\intercal\lambda-M) - \la \lambda, b\ra
       \label{eq:OT_dual_smax}
    \end{align}
    where
    \begin{align}\label{eq:softmax}
        \textbf{smax}_\gamma(y) \triangleq \gamma \log \left( \sum_{i=1}^m \exp(y_i/ \gamma) \right).
    \end{align}
    More importantly, the following property holds.
    \begin{proposition}[ {\cite[Theorem 3.4]{bullins2019higher}    }]\label{them:smax}
        Let~$z \in \R^n$,~$c \in \R^m$ and~$\mathcal{A}: \R^n \to \R^m$. Then the function~$\textbf{smax}_\gamma(\mathcal{A}z-c)$ is (order~$3$)~$\frac{15}{\gamma^3}$-smooth with respect to~$\|\cdot\|_{\mathcal{A}^\intercal\mathcal{A}}$.
    \end{proposition}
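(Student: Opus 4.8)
The plan is to prove order-$3$ smoothness by bounding the fourth derivative: since $g(z):=\textbf{smax}_\gamma(\mathcal{A}z-c)$ is $C^\infty$, the mean-value inequality gives $\|\nabla^3 g(z)-\nabla^3 g(z')\|\le\sup_\zeta\|\nabla^4 g(\zeta)\|\,\|z-z'\|$, so it suffices to bound $\|\nabla^4 g\|$ in the norm dual to $\|\cdot\|_{\mathcal{A}^T\mathcal{A}}$. First I would use the chain rule. As $\mathcal{A}$ is linear, for any direction $w$ the restriction $s\mapsto g(z+sw)$ equals $s\mapsto\textbf{smax}_\gamma(y+s\mathcal{A}w)$ with $y=\mathcal{A}z-c$, whence $\nabla^4 g(z)[w,w,w,w]=\nabla^4\textbf{smax}_\gamma(y)[h,h,h,h]$ for $h=\mathcal{A}w$. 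Since $\|w\|_{\mathcal{A}^T\mathcal{A}}=\|\mathcal{A}w\|_2=\|h\|_2$, and since the norm of a symmetric multilinear form equals its value on the diagonal (the equivalence recorded in the Notation section), the whole statement reduces to the single scalar inequality $|\nabla^4\textbf{smax}_\gamma(y)[h,h,h,h]|\le\frac{15}{\gamma^3}\|h\|_2^4$, uniformly in $y,h$.

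The key step is to read $\textbf{smax}_\gamma$ as a scaled cumulant generating function. Setting $\phi(s):=\textbf{smax}_\gamma(y+sh)=\gamma\log\sum_i e^{(y_i+sh_i)/\gamma}$, the map $\frac1\gamma\phi$ is, in the variable $s$, the cumulant generating function of the random variable $H$ taking value $h_i$ with the softmax probability $\pi_i(s)\propto e^{(y_i+sh_i)/\gamma}$, with observable rescaled by $1/\gamma$. Differentiating $k$ times and collecting the outer factor $\gamma$ against the factor $1/\gamma$ produced by each derivative yields $\phi^{(k)}(s)=\gamma^{1-k}\kappa_k$, where $\kappa_k$ is the $k$-th cumulant of $H$ under $\pi(s)$. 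At $s=0$ this gives $\nabla^4\textbf{smax}_\gamma(y)[h,h,h,h]=\gamma^{-3}\kappa_4$, with $\kappa_4=\E_\pi[(H-\bar H)^4]-3\big(\E_\pi[(H-\bar H)^2]\big)^2$ and $\bar H=\sum_i\pi_i h_i$. This isolates all the $\gamma$-dependence into the prefactor $\gamma^{-3}$, exactly matching the claimed form of the constant.

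It then remains to bound the fourth cumulant by $15\|h\|_2^4$. Here I would use only that $\pi$ is a probability distribution. Writing $\delta_i:=h_i-\bar H$, the value $\bar H$ is a convex combination of the $h_i$, so $|\delta_i|\le 2\|h\|_\infty\le 2\|h\|_2$, while $\E_\pi[\delta^2]\le\sum_i\pi_i h_i^2\le\|h\|_\infty^2\le\|h\|_2^2$. Consequently $\E_\pi[\delta^4]\le(\max_i\delta_i^2)\,\E_\pi[\delta^2]\le 4\|h\|_2^4$ and $3(\E_\pi[\delta^2])^2\le 3\|h\|_2^4$, so $|\kappa_4|\le 15\|h\|_2^4$ (in fact a smaller constant suffices, but $15$ is the value carried by the cited reference). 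Combined with the reduction above, this gives $|\nabla^4 g(z)[w,w,w,w]|\le\frac{15}{\gamma^3}\|w\|_{\mathcal{A}^T\mathcal{A}}^4$ and hence the asserted Lipschitz bound on $\nabla^3 g$. The main obstacle is the derivative bookkeeping: differentiating the softmax four times directly produces an unwieldy fourth-order tensor, and the efficient route is the cumulant interpretation; after that, the only genuine work is the combinatorial estimate of $\kappa_4$ together with the reduction to the diagonal of the symmetric form, which loses nothing.
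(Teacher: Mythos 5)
Your proof is correct, but there is nothing in the paper to compare it against line by line: the paper does not prove this proposition at all, it imports it verbatim by citation from Bullins (Theorem 3.4). So your argument is necessarily a different route, and it is a sound, self-contained one. The reduction through linearity of $\mathcal{A}$ (so that $\nabla^4 g(z)[w,w,w,w]=\nabla^4\textbf{smax}_\gamma(y)[h,h,h,h]$ with $h=\mathcal{A}w$ and $\|h\|_2=\|w\|_{\mathcal{A}^T\mathcal{A}}$) is valid, and your cumulant identity checks out: writing $\phi(s)=\gamma\log Z+\gamma K(s/\gamma)$, where $Z=\sum_i e^{y_i/\gamma}$ and $K$ is the cumulant generating function of $H$ under the normalized base measure, gives exactly $\phi^{(k)}(s)=\gamma^{1-k}\kappa_k$ with $\kappa_k$ the $k$-th cumulant under the softmax distribution $\pi(s)$. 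Your moment bounds $\E_\pi[\delta^2]\le\|h\|_2^2$ and $\E_\pi[\delta^4]\le 4\|h\|_2^4$ are also correct, and in fact yield $|\kappa_4|\le 7\|h\|_2^4$, i.e., a constant $7/\gamma^3$ that is sharper than the quoted $15/\gamma^3$; the looser constant $15$ in the cited source comes from differentiating log-sum-exp directly and bounding the resulting moment expansion term by term, which is precisely the "unwieldy bookkeeping" your cumulant viewpoint avoids.

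Two points are worth making explicit rather than leaving implicit. First, when $\mathcal{A}$ has a nontrivial kernel, $\|\cdot\|_{\mathcal{A}^T\mathcal{A}}$ is only a seminorm; the statement and your mean-value argument still make sense because $g$ depends on $z$ only through $\mathcal{A}z$, so every derivative form of $g$ annihilates $\ker\mathcal{A}$ and descends to the image space equipped with the Euclidean norm. Second, the step "the operator norm of a symmetric multilinear form equals its maximal diagonal value," which you need both to reduce to $[h,h,h,h]$ and to feed $\sup\|\nabla^4 g\|$ into the mean-value inequality for $\nabla^3 g$, is exact for inner-product (semi)norms — which is the setting here — so, as you claim, nothing is lost; but this exactness fails for general norms, so it deserves the explicit justification.
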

    As a corollary, the dual objective in \eqref{eq:OT_dual_smax} has~$({15}/{\gamma^3})$-Lipschitz-continuous third derivative w.r.t.~$\|\cdot\|_{AA^\intercal}$. Equivalently to the first order Lipschitz constant from~\cite{nesterov2005smooth}, we can write third order Lipschitz constant from Proposition~\ref{them:smax} as~$15 \|A\|_{E \to H}^4 / \gamma^3$. Due to our choice of norms in~$E$ and~$H$,~$\|A\|_{E \to H}$ is equal to the maximal Euclidean norm of the columns of matrix A. Thus,~$\|A\|_{E \to H} = \sqrt 2$.
    
    We can conclude that minimizing the norm of the gradient of the dual objective allows one to obtain an approximate solution to the corresponding primal problem that estimates the optimal transport cost and optimal transportation plan in this case. Thus, having a fast method that exploits the high-order smoothness of the dual problem can provide efficient algorithms for the computation of Sinkhorn distance~\cite{cuturi2013sinkhorn} defined as the solution to entropy regularized optimal transport problem. 
    
\section{Preliminaries}
\label{sec:prelim}

\cu{In this section, we present a series of auxiliary results that will later enable the development of our near-optimal algorithms, which will be presented in Section~\ref{sec:algos}. The reader might skip this section and revisit it for proof details.}

    We measure the complexity of algorithms in the number of calls to the oracle of the objective function. By oracle of some objective~$f$ we mean some mapping~$x \mapsto \{f(x), \nabla f(x), ..., \nabla^p f(x) \}, \forall x \in \text{dom} f,\ p \ge 1$.
    
    To make the paper self-contained, in this section, we recall the near-optimal tensor methods for minimization of convex objective functions with Lipschitz-continuous~$p$-th derivative \po{\cite{bubeck2019near}}.

\po{
    \floatname{algorithm}{Algorithm}
        \begin{algorithm}
        	\caption{Accelerated Taylor Descent~\cite[Algorithm 1]{bubeck2019near}}
        	\label{alg:MSN}
        	\begin{algorithmic}[1]
        		\REQUIRE~$N$~--- iteration number.
                \STATE Set~$A_0 = 0$, $x_0 = y_0 = 0$
        		\FOR{$k=0,1,2,\ldots, N-1$}
        		\STATE Compute $\lambda_{k + 1} > 0$ and $y_{k + 1} \in \R^d$ such that
        		\begin{equation}
        		\label{eq:L_k-cond}
                \frac{1}{2} \le \lambda_{k + 1} \frac{M_p \|y_{k + 1} - \tilde x_k\|^{p - 1}}{(p - 1)!} \le \frac{p}{p + 1},
        		\end{equation}
        		where
                \begin{gather*}
                    a_{k + 1} = \frac{\lambda_{k + 1} + \sqrt{\lambda_{k + 1}^2 + 4 \lambda_{k + 1} A_k}}{2},\quad A_{k + 1} = A_k + a_{k + 1}, \\
                    \tilde x_k = \frac{A_k}{A_{k + 1}} y_k + \frac{a_{k + 1}}{A_{k + 1}} x_k,\quad y_{k + 1} = T_{p, pM_p}^f(\tilde x_k).
                \end{gather*}
        		\STATE~$x_{k + 1} = x_k - a_{k + 1} \nabla f(y_{k + 1}).$
        		\ENDFOR
        		\RETURN~$y_N$
        	\end{algorithmic}
        \end{algorithm}
}
    

    \po{
    \begin{theorem}[Theorem 1 in \cite{bubeck2019near}]\label{Th:main-conv}
        Let $f$ be a convex function with $M_p$-Lipschitz $p$-th derivative. 
        Assume, that exists $R > 0: \| x_0 - x^*\|_2 \le R$, and let \mbox{$c_p = {2^{p - 1}(p + 1)^\frac{3p + 1}{2}}/{(p - 1)!}$}.
        Then, for all $N\geq 0$, the output of Algorithm \ref{alg:MSN} has the following property
                \begin{equation}
            f(y_N) - f(x^*) \le \frac{c_p M_p R^{p + 1}}{N^\frac{3p + 1}{2}}.
        \end{equation}
    	Moreover, each iteration~$k$ requires~$O\left(\ln({1}/{\e})\right)$ oracle calls. 
    \end{theorem}
    }
    
    At the core of the result in Theorem~\ref{Th:main-conv}, the authors in~\cite{bubeck2019near} use the following auxiliary result that we will later use in our proofs. We restate this result for completeness. 
    \po{
    \begin{lemma}[Lemma 11 from~\cite{bubeck2019near}]
        Let $c_p ={2^{p - 1} (p + 1)^\frac{3p + 1}{2}}/{(p - 1)!}$, and $k\geq 0$. Then, $A_k$ from Algorithm~\ref{alg:MSN} has the following property
        \begin{equation}\label{eq:A_k upper bound}
            A_k \ge \frac{1}{c_p M_p R^{p - 1}} k^\frac{3p + 1}{2}.
        \end{equation}
    \end{lemma}
    }
    The following statement from~\cite{monteiro2013accelerated} also holds for Algorithm~\ref{alg:MSN}.
    
    \begin{theorem}[Theorem 3.6 in~\cite{monteiro2013accelerated}]
        Let sequence \po{$(x_k, \tilde x_k, y_k), k \ge 0$} be generated by Algorithm~\ref{alg:MSN}, and define~$R:= \|\po{x_0} - x^*\|_2$. Then for all~$N \ge 0$
        \begin{gather}
            \frac{1}{2} \|\po{x_N} - x^*\|_2^2 + A_N \left( f(y_N) - f(x^*) \right) + \frac{1}{4} \sum_{k = 1}^N A_k L_{k - 1} \|y_k - \po{\tilde x_{k - 1}}\|_2^2 \le \frac{R^2}{2}, \\
            f(y_N) - f(x^*) \le \frac{R^2}{2 A_N}, \quad \|\po{x_N} - x^*\|_2 \le R, \label{eq:residual upper bound by A_N} \\
            \sum_{k = 1}^N A_k L_{k - 1} \|y_k - \po{\tilde x_{k - 1}}\|_2^2 \le 2 R^2.
        \end{gather}
    \end{theorem}
    
    Algorithm~\ref{alg:MSN} requires intermediate steps to find~$L_k$ and~$y_{k + 1}$. Since they depend on each other, we need to find them iteratively with a binary search procedure described  in~\cite[Section 4]{bubeck2019near}.

    \po{
    Since we know that $\Omega_{x,p,M}(y)$ is convex, then $\tilde z = T_{p, M}^f(x)$ exists. Thus,
    }
    \begin{equation}\label{eq:additional condition for solution}
        \po{\forall x \in \R^n \Rightarrow} \Omega_{x, p, M}(\tilde z) \overset{\po{\eqref{eq:tensor step}}}{=} \min_{y\in \R^n} \Omega_{x, p, M}(y)\le \Omega_{x, p, M}(x) \overset{\po{\eqref{eq:regularized taylor approximation}}}{=} f(x).
    \end{equation}
    We use this inequality to prove the following lemma, which is a particular case of~\cite[Lemma 5.2]{grapiglia2020tensor} with~$\nu=1$,~$\theta = 0$, and~$\vp = 0$.
    \begin{lemma}[Lemma 5.2 in~\cite{grapiglia2020tensor}]\label{lem:grap_nest}
        Let $p \ge 1$,~$M_p < \infty$,~$M \geq (p + 2)M_p$ and let for some~$x \in \mathbb R^n$
        \[
            \tilde z = T_{p, M}^{f}(x).
        \]
        Then,
        \[
            f(x) - f(\tilde z) \geq \frac{1}{4 (p + 2)! M^\frac{1}{p}} \|\nabla f(\tilde z)\|_2^\frac{p + 1}{p}.
        \]
    \end{lemma}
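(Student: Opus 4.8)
The plan is to bound $f(x)-f(\tilde z)$ from below and $\|\nabla f(\tilde z)\|_*$ from above, both in terms of the displacement $\|\tilde z-x\|_2$, and then to eliminate this displacement. The only analytic inputs I need are the two standard consequences of $\nabla^p f$ being $M_p$-Lipschitz. Writing $\Phi_x(y):=\sum_{r=0}^{p}\frac{1}{r!}\nabla^r f(x)[y-x,\dots,y-x]$ for the degree-$p$ Taylor polynomial of $f$ at $x$ (the sum inside the minimization in \eqref{eq4}), these read
\[
|f(y)-\Phi_x(y)|\le \frac{M_p}{(p+1)!}\|y-x\|_2^{p+1},\qquad \|\nabla f(y)-\nabla \Phi_x(y)\|_*\le \frac{M_p}{p!}\|y-x\|_2^{p}.
\]

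First I would establish the functional decrease. Because $\tilde z=T_{p,M}^f(x)$ minimizes the regularized model in \eqref{eq4}, its model value is at most the model value at $y=x$, which equals $f(x)$. Writing the model value at $\tilde z$ as $\Phi_x(\tilde z)+\frac{M}{(p+1)!}\|\tilde z-x\|_2^{p+1}$ and bounding $f(\tilde z)\le \Phi_x(\tilde z)+\frac{M_p}{(p+1)!}\|\tilde z-x\|_2^{p+1}$ by the first estimate, the Taylor polynomial cancels and I obtain
\[
f(x)-f(\tilde z)\ \ge\ \frac{M-M_p}{(p+1)!}\,\|\tilde z-x\|_2^{p+1}.
\]

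Next I would control the gradient. The first-order optimality condition for the model gives $\nabla\Phi_x(\tilde z)=-\frac{M}{p!}\|\tilde z-x\|_2^{p-1}(\tilde z-x)$, so that $\|\nabla\Phi_x(\tilde z)\|_*=\frac{M}{p!}\|\tilde z-x\|_2^{p}$. Splitting $\nabla f(\tilde z)=\bigl(\nabla f(\tilde z)-\nabla\Phi_x(\tilde z)\bigr)+\nabla\Phi_x(\tilde z)$, bounding the first summand by the second Taylor estimate and the second by the optimality relation, the triangle inequality yields
\[
\|\nabla f(\tilde z)\|_*\ \le\ \frac{M+M_p}{p!}\,\|\tilde z-x\|_2^{p}.
\]
Solving this for $\|\tilde z-x\|_2$ and substituting into the decrease bound removes the displacement and leaves an inequality of exactly the advertised shape, $f(x)-f(\tilde z)\ge \frac{\mathrm{const}}{(p+1)!\,M^{1/p}}\|\nabla f(\tilde z)\|_*^{(p+1)/p}$, where the constant is a function of the ratios $(M-M_p)/M$ and $(M+M_p)/M$; the hypothesis $M\ge pM_p$ (hence $M_p\le M/p$) bounds these ratios away from $0$ and $\infty$, and a short computation turns this into the explicit factor $1/8$.

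The routine part is the first two steps; the delicate point, which I expect to be the main obstacle, is the constant bookkeeping in the final elimination. In particular the factor $M-M_p$ in the decrease bound is only controlled by $(p-1)/p$ after using $M\ge pM_p$, which degenerates as $p\to 1$. To keep the constant uniform down to $p=1$ (where $\tilde z$ is an ordinary regularized gradient step) one should replace the crude comparison ``model value at $\tilde z\le$ model value at $x$'' by a sharper completion-of-the-square descent estimate; the same elimination then recovers the stated factor $8(p+1)!$.
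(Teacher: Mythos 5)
Your argument is necessarily a different route from the paper's, because the paper has no proof of this statement at all: it imports Lemma 5.2 from \cite{grapiglia2019tensor} (with $\nu=1$, so $M^{1/(p+\nu-1)}=M^{1/p}$, which you read correctly) and uses it as a black box. Taken on its own terms, your proof is correct and complete for every integer $p\ge 2$: the two Taylor bounds are the right ingredients, the model comparison gives $f(x)-f(\tilde z)\ge\frac{M-M_p}{(p+1)!}\|\tilde z-x\|_2^{p+1}$, stationarity plus the triangle inequality gives $\|\nabla f(\tilde z)\|_*\le\frac{M+M_p}{p!}\|\tilde z-x\|_2^{p}$, and the elimination reduces the claim to the inequality $8(M-M_p)M^{1/p}(p!)^{(p+1)/p}\ge(M+M_p)^{(p+1)/p}$. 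Under $M_p\le M/p$ this follows from $8\tfrac{p-1}{p}(p!)^{(p+1)/p}\ge\bigl(\tfrac{p+1}{p}\bigr)^{(p+1)/p}$, which holds with large margin for all $p\ge2$ (at $p=2$: $4\sqrt{8}\approx 11.3$ versus $(3/2)^{3/2}\approx 1.84$). So the ``constant bookkeeping'' you worried about is actually the easy part when $p\ge2$.

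The genuine gap is $p=1$, and it cannot be waved off as a corner case: the paper invokes this lemma inside Algorithms \ref{alg:1} and \ref{alg:2} with $M=M_\mu=pM_p$ exactly, so at $p=1$ your descent bound carries the factor $M-M_p=0$ and proves nothing, precisely in the regime where the lemma is used. You correctly name the repair (use the linear term of the model rather than the crude comparison ``model value at $\tilde z\le$ model value at $x$''), but you do not carry it out, and it is the one step that genuinely needs an argument. For completeness, it does work: for $p=1$ one has $\tilde z=x-\frac1M\nabla f(x)$, the descent lemma gives $f(x)-f(\tilde z)\ge\frac1M\bigl(1-\frac{M_1}{2M}\bigr)\|\nabla f(x)\|_2^2\ge\frac{1}{2M}\|\nabla f(x)\|_2^2$ whenever $M\ge M_1$, while $\|\nabla f(\tilde z)\|_2\le\bigl(1+\frac{M_1}{M}\bigr)\|\nabla f(x)\|_2\le 2\|\nabla f(x)\|_2$, so $f(x)-f(\tilde z)\ge\frac{1}{8M}\|\nabla f(\tilde z)\|_2^2\ge\frac{1}{8\cdot 2!\,M}\|\nabla f(\tilde z)\|_2^2$, which is the claim at $p=1$. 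With that paragraph added, your proof is a valid self-contained substitute for the citation; without it, the lemma remains unproved exactly where the paper needs it for first-order regularized steps.
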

    \begin{proof}
        From triangle inequality, \eqref{eq:grad objective and grad Taylor approximation diff upper bound} and definition of~$\tilde z$, we get
        \begin{align}
            \|\nabla f(\tilde z)\|_2 &= \|\nabla f(\tilde z) - \nabla \Phi_{x, p}(\tilde z) + \nabla \Phi_{x, p}(\tilde z) - \nabla \Omega_{x, p, M}(\tilde z) + \nabla \Omega_{x, p, M}(\tilde z) \|_2 \notag \\
            &\le \|\nabla f(\tilde z) - \nabla \Phi_{x, p}(\tilde z)\|_2 + \|\nabla \Phi_{x, p}(\tilde z) - \nabla \Omega_{x, p, M}(\tilde z)\| + \|\nabla \Omega_{x, p, M}(\tilde z)\|_2\notag \\
            &\overset{\eqref{eq:grad objective and grad Taylor approximation diff upper bound}}{\le} \frac{M_p}{(p - 1)!}\|\tilde z - x\|_2^p + \frac{M}{p!}\|\tilde z - x\|_2^p = \left( \frac{p M_p}{p!} + \frac{M}{p!} \right) \|\tilde z - x\|_2^p \notag \\
            &\le 2 M \|\tilde z - x\|_2^p. \label{eq:lem_1 eq_1}
        \end{align}
        Next, from \eqref{eq:objective and Taylor approximation diff upper bound}, \eqref{eq:additional condition for solution} follows
        \begin{align*}
            f(\tilde z) &\overset{\eqref{eq:objective and Taylor approximation diff upper bound}}{\le} \Phi_{x, p} (\tilde z) + \frac{M_p}{p!} \|\tilde z - x\|^{p + 1}_2 = \Phi_{x, p} (\tilde z) + \frac{(p + 1) M_p}{(p + 1)!} \|\tilde z - x\|^{p + 1}_2 \\
            &= \Phi_{x, p} (\tilde z) + \frac{M}{(p + 1)!} \|\tilde z - x\|_2^{p + 1} - \frac{(M - (p + 1)M_p)}{(p + 1)!} \|\tilde z - x\|_2^{p + 1} \\
            &= \Omega_{x, p, M}(\tilde z) - \frac{(M - (p + 1)M_p)}{(p + 1)!} \|\tilde z - x\|_2^{p + 1} \\
            &\overset{\eqref{eq:additional condition for solution}}{\le} f(x) - \frac{(M - (p + 1)M_p)}{(p + 1)!} \|\tilde z - x\|_2^{p + 1}.
        \end{align*}
        Since~$M \ge (p + 2)M_p \Leftrightarrow \frac{1}{p + 2}M \ge M_p$, we get
        \begin{align}
            f(x) - f(\tilde z) &\ge  \frac{(M - (p + 1)M_p)}{(p + 1)!} \|\tilde z - x\|_2^{p + 1} \notag \\
            &\ge \frac{(M - \frac{p + 1}{p + 2}M)}{(p + 1)!} \|\tilde z - x\|_2^{p + 1} \notag \\
            &= \frac{M}{(p + 2)!} \|\tilde z - x \|_2^{p + 1}. \label{eq:lem_1 eq_2}
        \end{align}
        If we combine \eqref{eq:lem_1 eq_1} and \eqref{eq:lem_1 eq_2}, we obtain the final result for all~$p \ge 1$
        \begin{align*}
            f(x) - f(\tilde z) &\ge \frac{M}{(p + 2)!} \|\tilde z - x \|_2^{p + 1} = \frac{M}{(p + 2)!} (\|\tilde z - x \|_2^{p})^{(p + 1) / p} \\
            &\overset{\eqref{eq:lem_1 eq_1}}{\ge} \frac{M}{(p + 2)!} \left( \frac{\|\nabla f(\tilde z)\|_2}{2 M} \right)^\frac{p + 1}{p} = \frac{\|\nabla f(\tilde z)\|_2^\frac{p + 1}{p}}{2^{\frac{p + 1}{p}} M^{\frac{1}{p}} (p + 2)!} \\
            &\ge \frac{\|\nabla f(\tilde z)\|_2^\frac{p + 1}{p}}{4 M^{\frac{1}{p}} (p + 2)!}.
        \end{align*}
    \end{proof}

\section{Near-optimal tensor methods for gradient norm minimization}\label{sec:algos}

\cu{In this section, we will build upon Algorithm~\ref{alg:MSN} to develop near-optimal tensor methods for gradient norm minimization of convex functions. This section is divided into two parts: first, we develop near-optimal tensor methods with respect to an estimate of the initial objective residual in Subsection~\ref{sec:residual} presented in Algorithm~\ref{alg:Delta}, then in Subsection~\ref{sec:argument}, we develop near-optimal tensor methods with respect to an estimate of the initial argument residual presented in Algorithm~\ref{alg:R}. Note that both proposed algorithms have Algorithm~\ref{alg:MSN} at their core, and rely on the bounds presented in Section~\ref{sec:prelim}. }

    \subsection{Near-optimal tensor methods with respect to the initial objective residual}\label{sec:residual}

        In this subsection, we build up from Algorithm~\ref{alg:MSN}        to develop a near-optimal algorithm for which we can provide explicit complexity bounds for approximating a stationary point. The obtained oracle complexity bound matches up to a logarithmic factor the lower complexity bound presented in~\cite{grapiglia2020tensor}. This subsection focuses on complexity bounds that depend on the initial objective residual. Thus, the basic assumption is that the starting point~$x_0$ satisfies~$f(x_0) - f^* \le \Delta_0$. 
        
        \begin{algorithm}
        	\caption{Near-optimal algorithm with respect to initial objective residual}\label{alg:Delta}
        	\begin{algorithmic}[1]
        		\REQUIRE $p \ge 2$,~$M_p$, $x_0$,~$\Delta_0: f(x_0) - f^* \le \Delta_0$,~$\e >0$.
        		
        		\STATE \textbf{Define}:
                \begin{gather*}
                    k = 0, \quad M_\mu = (p + 2)M_p, \quad \mu = \frac{\e^2}{32 \Delta_0}, \quad \tilde \e = \frac{(\e/2)^{\frac{p+1}{p}}}{4 (p+2)! M_\mu^{\frac{1}{p}}},\\
                    z_0 = x_0,   \quad f_\mu(x) = f(x) + \frac{\mu}{2}\|x - x_0\|_2^2.
                \end{gather*}
                
        		
        		\WHILE {$\Delta_k \ge \tilde \e$\, where~$\Delta_k = \Delta_0\cdot2^{-k}$.}
                
        		\STATE Set~$z_{k + 1}$ as the output of Algorithm~\ref{alg:MSN}  applied to~$f_{\mu}(x)$ starting from~$z_k$ and run for~$N_k$ steps, where~$N_k$ is such that~$A_{N_k} \ge {2}/{\mu}$.
        		\STATE $k = k + 1$.
        		\ENDWHILE
        		\STATE \textbf{Find}~$\tilde z = T_{p,\ M_\mu}^{f_\mu}(z_k)$.
        		\RETURN~$\tilde z$.
        	\end{algorithmic}
        \end{algorithm}
        
        \begin{theorem}\label{thm:func residual}
        	Let~$p \ge 2$. Assume the function~$f$ is convex,~$p$ times differentiable on~$\R^n$ with~$M_p$-Lipschitz~$p$-th derivative.
            \po{Assume, that $\Delta_0 > 0$ is such that \mbox{$f(x_0) - f^* \le \Delta_0$}.}
            Let~$\tilde z$ be generated by Algorithm~\ref{alg:Delta}. Then
        	\[
        	\|\nabla f(\tilde z)\|_2 \le \e,
        	\]
        	and the total number of iterations of Algorithm~\ref{alg:MSN} required by Algorithm~\ref{alg:Delta} is 
        	\[
        	O\Bigg(\frac{M_p^\frac{2}{3p + 1}}{\e^\frac{2(p + 1)}{3p + 1}} \Delta_0^\frac{2p}{3p + 1} + \log_2 \frac{2^\frac{4p - 3}{p + 1} \Delta_0 (pM_p)^\frac{1}{p}(p + 1)!}{\e^\frac{p}{p+1}} \Bigg).
        	\]
        	Moreover, the total oracle complexity is within a~$O\left(\ln\frac{1}{\e}\right)$ factor of the above iteration complexity due to the use of binary search in Algorithm~\ref{alg:MSN}.
        \end{theorem}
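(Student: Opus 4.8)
The plan is to exploit a standard regularization-plus-restart scheme. Throughout I would work with the strongly convex surrogate $f_\mu(x) = f(x) + \frac{\mu}{2}\|x-x_0\|_2^2$, which is $\mu$-strongly convex and, for $p \ge 2$, inherits the Lipschitz constant $M_p$ of the $p$-th derivative of $f$ (the quadratic term has vanishing $p$-th derivative). Denote by $x_\mu^*$ the unique minimizer of $f_\mu$ and by $f_\mu^*$ its value. The three pillars are: (i) Theorem~\ref{Th:main-conv}, governing the progress of a single call to Algorithm~\ref{alg:MSN} on $f_\mu$; (ii) $\mu$-strong convexity, used to convert a function-value guarantee into a distance-to-minimizer guarantee and thereby obtain a geometric (halving) decrease across restarts; and (iii) Lemma~\ref{lem:grap_nest} applied to the final tensor step $\tilde z = T_{p,M_\mu}^{f_\mu}(z_k)$, which turns the residual $f_\mu(z_k) - f_\mu^*$ into a bound on $\|\nabla f_\mu(\tilde z)\|_*$.

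First I would establish the loop invariant $f_\mu(z_k) - f_\mu^* \le \Delta_k$ by induction on $k$. The base case follows from $f_\mu(z_0) - f_\mu^* = f(x_0) - f_\mu^* \le f(x_0) - f^* \le \Delta_0$, using $z_0 = x_0$ and $f_\mu^* \ge f^*$. For the inductive step, $\mu$-strong convexity gives $\|z_k - x_\mu^*\|_2^2 \le \frac{2}{\mu}(f_\mu(z_k) - f_\mu^*) \le \frac{2\Delta_k}{\mu}$. Feeding this into Theorem~\ref{Th:main-conv} applied to $f_\mu$ started at $z_k$ and run for $N_k$ steps yields
\[
f_\mu(z_{k+1}) - f_\mu^* \le \frac{cM_p\|z_k - x_\mu^*\|_2^{p+1}}{N_k^{(3p+1)/2}} \le \frac{cM_p\,(2\Delta_k/\mu)^{(p+1)/2}}{N_k^{(3p+1)/2}}.
\]
The definition of $N_k$ in \eqref{eq:rest-MSN-N_k_def} is exactly the smallest integer making the right-hand side at most $\Delta_k/2 = \Delta_{k+1}$, closing the induction. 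Hence the loop terminates after $\hat K = \lceil \log_2(\Delta_0/\tilde\e)\rceil$ restarts, with $f_\mu(z_{\hat K}) - f_\mu^* < \tilde\e$.

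Next I would convert this into the gradient bound via $\nabla f(\tilde z) = \nabla f_\mu(\tilde z) - \mu(\tilde z - x_0)$, bounding each summand by $\e/2$. For the first, Lemma~\ref{lem:grap_nest} applied to $\tilde z = T_{p,M_\mu}^{f_\mu}(z_{\hat K})$ (valid since $M_\mu = pM_p \ge pM_p$) gives $\frac{1}{8(p+1)!M_\mu^{1/p}}\|\nabla f_\mu(\tilde z)\|_*^{(p+1)/p} \le f_\mu(z_{\hat K}) - f_\mu(\tilde z) \le f_\mu(z_{\hat K}) - f_\mu^* < \tilde\e$, and the value of $\tilde\e$ is calibrated precisely so that this forces $\|\nabla f_\mu(\tilde z)\|_* \le \e/2$. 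For the second summand, the tensor step does not increase $f_\mu$, so $f_\mu(\tilde z) - f_\mu^* \le \tilde\e \le \Delta_0$ and strong convexity gives $\|\tilde z - x_\mu^*\|_2 \le \sqrt{2\Delta_0/\mu}$; combined with $\|x_\mu^* - x_0\|_2 \le \sqrt{2\Delta_0/\mu}$ (from $f_\mu(x_\mu^*) \le f_\mu(x_0) = f(x_0)$) and the triangle inequality, $\mu\|\tilde z - x_0\|_2 \le 2\sqrt{2\mu\Delta_0} = \e/2$ by the choice $\mu = \e^2/(32\Delta_0)$. Adding the two estimates gives $\|\nabla f(\tilde z)\|_2 \le \e$.

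Finally, for the complexity I would sum $\sum_{k=0}^{\hat K-1} N_k$. Since $\Delta_k = \Delta_0 2^{-k}$ and the exponent $\frac{p-1}{3p+1} \ge 0$, the terms $N_k \approx (C\Delta_k^{(p-1)/2})^{2/(3p+1)} + 1$ form a geometric series in $k$ dominated by $N_0$, plus $\hat K$ from the additive ones and the ceilings; substituting $\mu = \e^2/(32\Delta_0)$ into $N_0$ produces the leading term $M_p^{2/(3p+1)}\e^{-2(p+1)/(3p+1)}\Delta_0^{2p/(3p+1)}$, while $\hat K = O(\log_2(\Delta_0/\tilde\e))$ yields the logarithmic term. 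Multiplying by the per-iteration $O(\ln(1/\e))$ oracle cost from Theorem~\ref{Th:main-conv} gives the stated oracle complexity. The main obstacle is the careful bookkeeping that ties the algorithmic constant in $N_k$ to the halving property through the interplay of Theorem~\ref{Th:main-conv} and strong convexity, together with the simultaneous calibration of $\mu$ and $\tilde\e$ so that the two contributions to $\|\nabla f(\tilde z)\|_2$ are each exactly $\e/2$.
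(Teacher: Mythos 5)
Your proof follows essentially the same route as the paper's: the same regularization $f_\mu(x)=f(x)+\frac{\mu}{2}\|x-x_0\|_2^2$ with $\mu=\e^2/(32\Delta_0)$, the same halving invariant $f_\mu(z_k)-f_\mu(x_\mu^*)\le\Delta_k$ proved by induction via Theorem~\ref{Th:main-conv} combined with $\mu$-strong convexity, the same application of Lemma~\ref{lem:grap_nest} to the final step $\tilde z=T_{p,M_\mu}^{f_\mu}(z_k)$, and the same splitting $\|\nabla f(\tilde z)\|_2\le\|\nabla f_\mu(\tilde z)\|_2+\mu\|\tilde z-x_0\|_2\le\e/2+\e/2$. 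The complexity accounting (a geometrically decaying sum of the $N_k$ plus a logarithmic number of restarts, multiplied by the $O(\ln(1/\e))$ per-iteration oracle cost) also matches the paper's argument, so the proposal is correct and not a distinct approach.
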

        \begin{proof}
           
            By definition of~$f_\mu(x)$:
        	\[
        	f_\mu(x_0) - f_\mu(x_\mu^*) = f(x_0) - f(x_\mu^*) - \frac{\mu}{2}\|x_\mu^* - x_0\|_2^2  \leq f(x_0) - f(x^*) \leq \Delta_0,
        	\]
        	Where~$x_\mu^*$ is the minimum of~$f_{\mu}(x)$. So, for~$k = 0$ we have~$f_\mu(z_k) - f_\mu(x_\mu^*) \le \Delta_k$.
        	Let us assume that~$f_\mu(z_k) - f_\mu(x_\mu^*) \le \Delta_k$ and show that~$f_\mu(z_{k+1}) - f_\mu(x_\mu^*) \le \Delta_{k+1}$. 
            As you can see, we use Algorithm~\ref{alg:MSN} inside Algorithm~\ref{alg:Delta} and restart it every time~$A_{N_k} \ge {2}/{\mu}$. We can do this due to the strong convexity of~$f_\mu$.
            From \eqref{eq:residual upper bound by A_N}, strong convexity and this restart condition it holds that
            \begin{align}
                f_\mu(z_{k + 1}) - f_\mu(x_\mu^*) 
                &\overset{\eqref{eq:residual upper bound by A_N}}{\le} \frac{\|z_k - x^*_\mu\|^2_2}{2 A_{N_k}} \le \frac{1}{2 A_{N_k}} \left( \frac{2(f_\mu(z_k) - f_\mu(x_\mu^*))}{\mu} \right) \notag \\
                &\le \frac{\Delta_k}{\mu A_{N_k}} \le \frac{\Delta_{k}}{2} = \Delta_{k + 1}. \label{eq:residual Delta upper bound from the stopping criterion}
            \end{align}
        	Thus,~$f_\mu(z_k) - f_\mu(x_\mu^*) \le \Delta_k$ for all~$k \ge 0$.

            Although such a stopping criterion is useful for numerical experiments, it is not obvious how to derive a theoretical upper bound for the number of steps of Algorithm~\ref{alg:MSN}. 
            We can use \eqref{eq:A_k upper bound} and \eqref{eq:residual Delta upper bound from the stopping criterion} to obtain an upper bound for the number $\tilde N_k$ of steps of Algorithm~\ref{alg:MSN} sufficient to fulfill this stopping criterion. Denote by~$A_{\tilde N_k}$ such constant~$A_N$, which we get after~$\tilde N_k$ steps of the Algorithm~\ref{alg:MSN}. 
            Then
            \[
                f_\mu(z_{k + 1}) - f_\mu(x_\mu^*) \overset{\eqref{eq:residual Delta upper bound from the stopping criterion}}{\le} \frac{\Delta_k}{\mu A_{\tilde N_k}}.
            \]
            From strong convexity, we know that
            \[
                \|z_k - x^*_\mu\|_2 \le \sqrt{\frac{2}{\mu}\Delta_k}.
            \]
            Thus, we can choose~$R_k = \sqrt{({2}/{\mu})\Delta_k}$.
            
            From \eqref{eq:A_k upper bound} we can choose~$N_k$ to fulfill the stopping criterion~$A_{\tilde N_k} \ge {2}/{\mu}$:
            \begin{equation}\label{eq:tilde N_k for Delta}
                \tilde N_k = \max\left\{\left\lceil \left( \frac{2c_{\po{p}}M_p 2^{\frac{p+1}{2}}}{\mu^{\frac{p+1}{2}}} \Delta_k^{\frac{p-1}{2}} \right)^{\frac{2}{3p+1}}\right\rceil,1 \right\}.
            \end{equation}
            Therefore, we get
            \begin{equation*}
                f_\mu(z_{k + 1}) - f_\mu(x_\mu^*) \overset{\eqref{eq:residual Delta upper bound from the stopping criterion}}{\le} \frac{\Delta_k}{\mu A_{\tilde N_k}} \le \frac{c_{\po{p}} M_p}{\tilde N_k^\frac{3p + 1}{2}} \left( \frac{2 \Delta_k}{\mu} \right)^\frac{p + 1}{2} \le \frac{\Delta_k}{2}.
            \end{equation*}
        
            Next, we estimate~$\|\nabla f_\mu(\tilde z)\|_2$. \po{Since $\tilde z = T_{p, M_\mu}^{f_\mu}(z_k)$, and} according to {Lemma~\ref{lem:grap_nest}}, we have
        	\begin{equation}\label{aux1}
                f_\mu(z_k) - f_\mu(\tilde z) \ge \frac{1}{4 (p + 2)! M_\mu^\frac{1}{p}} \|\nabla f_{\po{\mu}}(\tilde z)\|_2^\frac{p + 1}{p}.
        	\end{equation}
        	At the same time, by the stopping criterion in Algorithm~\ref{alg:Delta},
        	\begin{equation}\label{aux2}
        	f_\mu(z_k) - f_\mu(\tilde z) \le f_\mu(z_k) - f_\mu(x_\mu^*) \le \Delta_k \leq \tilde \e.
        	\end{equation}
        	By the definition of~$\tilde \e$ and \eqref{aux1}, \eqref{aux2}, we have that
        	\begin{equation}\label{eq:grad_est}
        	\|\nabla f_\mu(\tilde z)\|_2 \le \frac{\e}{2}.
        	\end{equation}
        	\po{
            Since the right-hand side of \eqref{aux1} is non-negative, we can state that 
            \begin{equation}\label{eq:aux2}
                f_\mu(\tilde z) \le f_\mu(z_k).
            \end{equation}
            }
            By definition,~$f_\mu$ is~$\mu$-strongly convex and, using \eqref{eq:aux2}, we get
        	\begin{align}
        	   &\frac{\mu}{2} \|x_\mu^* - x_0\|^2_2 \le f_\mu(x_0) - f_\mu(x_\mu^*) \le \Delta_0, \label{eq:mu_x_0} \\
        	   &\frac{\mu}{2} \|\tilde z - x_\mu^*\|^2_2 \le f_\mu(\tilde z) - f_\mu(x_\mu^*) \overset{\eqref{eq:aux2}}{\le} f_\mu(z_k) - f_\mu(x_\mu^*) \leq \po{\Delta_k \le} \Delta_0. \label{eq:mu_tilde_z}
        	\end{align}
        	Applying triangle inequality to the sum of \eqref{eq:mu_x_0} and \eqref{eq:mu_tilde_z}, we get
        	\begin{gather*}
        	\frac{\mu}{2} \|\tilde z - x_0\|^2_2 \le \mu \big(\|x_\mu^* - x_0\|^2_2 + \|\tilde z - x_\mu^*\|^2_2 \big) \le 4 \Delta_0,
        	\end{gather*}
        	and
        	\begin{gather*}
        	   \|\tilde z - x_0\|_2 \le 2 \sqrt{\frac{ 2\Delta_0}{\mu}}.
        	\end{gather*}
        	By definition of~$\mu$ in Algorithm~\ref{alg:Delta}, we have
        	\begin{equation}\label{eq:arg_residual}
        	\mu \|\tilde z - x_0\|_2 \le \mu \cdot 2 \sqrt{\frac{ 2\Delta_0}{\mu}} = 2 \sqrt{2\mu \Delta_0} = \frac{\e}{2}.
        	\end{equation}
        	
        	Finally, according to the definition of~$f_\mu$, \eqref{eq:grad_est}, \eqref{eq:arg_residual} and triangle inequality, we get
        	\[
        	\|\nabla f(\tilde z)\|_2 \le \|\nabla f_\mu(\tilde z)\|_2 + \mu \|\tilde z - x_0\|_2 \le \frac{\e}{2} + \frac{\e}{2} = \e.
        	\]
        	
            It remains to upper bound the total number of steps of Algorithm~\ref{alg:MSN}. Denote \mbox{$\tilde{c} = \big(2c_{\po{p}} 2^{\frac{p+1}{2}}\big)^{\frac{2}{3p+1}}$} and aggregate~$\tilde N_k$'s from \eqref{eq:tilde N_k for Delta}
            \begin{equation*}
                \sum_{i=0}^k \tilde N_i \leq \tilde{c} \frac{M_p^{\frac{2}{3p+1}} }{\mu^{\frac{p+1}{3p+1}}} \sum_{i=0}^k (\Delta_0\cdot 2^{-i})^{\frac{p-1}{3p+1}} + k \leq \tilde{c} \frac{M_p^{\frac{2}{3p+1}} }{\mu^{\frac{p+1}{3p+1}}} \Delta_0^{\frac{p-1}{3p+1}} \cdot \sum_{i=0}^k  2^{-i\frac{p-1}{3p+1}} + k \notag 
            \end{equation*}
            Since~$p \ge 2$,~$\sum_{i=0}^k  2^{-i\frac{p-1}{3p+1}}$ is a geometric progression with a common ratio of less than one. Therefore, we can upper bound its partial sum by its infinite sum: 
            \begin{equation}\label{eq:geom progression upper bound}
                \sum_{i=0}^k  2^{-i\frac{p-1}{3p+1}} \le \frac{2}{1 - 2^{-\frac{p-1}{3p+1}}} \le 2 \cdot 16 = 32.
            \end{equation}
            Thus, we get
            \begin{gather}
                \sum_{i=0}^k \tilde N_i \leq \tilde{c} \frac{M_p^{\frac{2}{3p+1}} }{\mu^{\frac{p+1}{3p+1}}} \Delta_0^{\frac{p-1}{3p+1}} \cdot \sum_{i=0}^k  2^{-i\frac{p-1}{3p+1}} + k  \leq 32 \tilde{c} \frac{M_p^{\frac{2}{3p+1}} }{\mu^{\frac{p+1}{3p+1}}} \Delta_0^{\frac{p-1}{3p+1}} + \log_2\frac{\Delta_0}{\tilde{\e}} \notag \\
                = O\Bigg( \frac{M_p^\frac{2}{3p + 1}}{\e^\frac{2(p + 1)}{3p + 1}} \Delta_0^\frac{2p}{3p + 1} + \log_2 \frac{2^\frac{4p - 3}{p + 1} \Delta_0 (pM_p)^\frac{1}{p}(p + 1)!}{\e^\frac{p}{p+1}} \Bigg). \label{eq:upper bound on Delta}
            \end{gather}
            
            According to Theorem~\ref{Th:main-conv}, the total number of oracle calls is within the~$O\left(\ln({1}/{\e})\right)$ factor from the number of iterations of Algorithm~\ref{alg:MSN}. This completes the proof.
        \end{proof}
        
        If we omit the dominated factors in the result \eqref{eq:upper bound on Delta}, we obtain the complexity bound 
        \[
            \tilde O  \left( \frac{M_p \Delta_0^p}{\e^{p + 1}} \right)^\frac{2}{3p + 1},
        \]
        where the~$\tilde O$ notation hides an additional multiplicative logarithmic factor. We can conclude that this bound coincides with the lower bound $$\Omega \Big( \frac{M_p \Delta_0^p}{\e^{p + 1}} \Big)^\frac{2}{3p + 1},$$ from~\cite{grapiglia2020tensor} up to logarithmic and constant factors.
        
  \subsection{Near-optimal tensor methods with respect to the initial variable residual}\label{sec:argument}
    
        In this subsection, we build up from Algorithm~\ref{alg:MSN}
        to develop a near-optimal algorithm, we provide explicit complexity bounds for approximating a stationary point. The obtained oracle complexity bound matches the lower bound presented in~\cite{grapiglia2020tensor} up to a logarithmic factor. The basic assumption is that the starting point~$x_0$ satisfies ~$\|x_0 - x^*\|_2 \le R$.
        
        \begin{algorithm}
        	\caption{Near-optimal algorithm for initial argument residual}\label{alg:R}
        	\begin{algorithmic}[1]
        		\REQUIRE $p \ge 2$, $M_p$,~$x_0$,~$R: \|x_0 - x^*\|_2 \le R$,~$\e>0$.
        		
        		\STATE \textbf{Define}:
                \begin{gather*}
                    k = 0, \quad M_\mu = (p + 2)M_p, \quad \mu = \frac{\e}{4R}, \quad \tilde \e = \frac{(\e/2)^{\frac{p+1}{p}}}{4 (p+2)! M_\mu^{\frac{1}{p}}}, \\
                    z_0 = x_0, \quad f_\mu(x) = f(x) + \frac{\mu}{2} \|x - x_0\|_2^2.
                \end{gather*}
                
        		\WHILE {$\mu R_k^2 / 2 \ge \tilde \e$, where~$R_k = R \cdot 2^{-k}$}
        		
        %
        		\STATE~Set~$z_{k + 1} = y_{N_k}$ as the output of Algorithm~\ref{alg:MSN} applied to~$f_{\mu}(x)$ starting from~$z_k$ and run for~$N_k$ steps, where~$N_k$ is such that~$A_{N_k} \ge {4}/{\mu}$.
        		\STATE~$k = k + 1$.
        		\ENDWHILE
        		\STATE \textbf{Find}~$\tilde z = T_{p,\ M_\mu}^{f_\mu}(z_k)$.
        		\RETURN~$\tilde z$.
        		
        	\end{algorithmic}
        \end{algorithm}
        
        \begin{theorem}\label{thm:arg residual}
        	Let~$p \ge 2$. 
            Assume the function~$f$ is convex,~$p$ times differentiable on~$\R^n$ with~$M_p$-Lipschitz~$p$-th derivative. 
            \po{Assume that there exists $R > 0$ is such that  $\|x_0 - x^*\|_2 \le R$.}
            Let~$\tilde z$ be generated by Algorithm~\ref{alg:R}. 
            Then
        	\begin{equation}\label{eq:grad condition}
        	\|\nabla f(\tilde z)\|_2 \le \e
        	\end{equation}
        	and the total number of iterations of Algorithm~\ref{alg:MSN} required by Algorithm~\ref{alg:R} is 
        	\[
        	O \Bigg( \frac{M_p^\frac{2}{3p + 1} R^\frac{2p}{3p + 1}}{\e^\frac{2}{3p + 1}} + \log \frac{2^\frac{p}{p + 1} (p + 1)! (pM_p)^\frac{1}{p}}{\e^\frac{1}{p + 1}} \Bigg).
        	\] 
        	Moreover, the total oracle complexity is within a~$O\left(\ln({1}/{\e})\right)$ factor of the above iteration complexity.
        \end{theorem}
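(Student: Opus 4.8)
The plan is to follow the same restart scheme as in the proof of Theorem~\ref{thm:func residual}, but to track the \emph{argument} residual $\|z_k-x_\mu^*\|_2$ instead of the objective residual, where $x_\mu^*$ is the unique minimizer of the $\mu$-strongly convex function $f_\mu(x)=f(x)+\frac{\mu}{2}\|x-x_0\|_2^2$. Since the quadratic regularizer has vanishing $p$-th derivative, $f_\mu$ retains the $M_p$-Lipschitz $p$-th derivative, so both Theorem~\ref{Th:main-conv} and Lemma~\ref{lem:grap_nest} apply to it verbatim. The outer loop is arranged so that the bound $R_k=R\cdot 2^{-k}$ on $\|z_k-x_\mu^*\|_2$ is halved at each restart, and the inner budget $N_k$ in \eqref{eq:N_k} is exactly the number of steps of Algorithm~\ref{alg:MSN} that forces one such halving.

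First I would prove the invariant $\|z_k-x_\mu^*\|_2\le R_k$ by induction. For the base case, $\mu$-strong convexity of $f_\mu$ evaluated at $x_*$ together with $f(x_\mu^*)\ge f(x_*)$ gives the Pythagorean-type inequality $\|x_0-x_*\|_2^2\ge \|x_0-x_\mu^*\|_2^2+\|x_*-x_\mu^*\|_2^2$, so $\|z_0-x_\mu^*\|_2=\|x_0-x_\mu^*\|_2\le\|x_0-x_*\|_2\le R=R_0$. For the inductive step I would apply Theorem~\ref{Th:main-conv} to $f_\mu$ to obtain $f_\mu(z_{k+1})-f_\mu(x_\mu^*)\le cM_pR_k^{p+1}/N_k^{(3p+1)/2}$, then use the strong-convexity bound $\frac{\mu}{2}\|z_{k+1}-x_\mu^*\|_2^2\le f_\mu(z_{k+1})-f_\mu(x_\mu^*)$. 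Substituting the value of $N_k$ from \eqref{eq:N_k} is precisely what makes the right-hand side at most $\frac{\mu}{8}R_k^2$, which yields simultaneously $\|z_{k+1}-x_\mu^*\|_2\le R_{k+1}=R_k/2$ and the companion objective bound $f_\mu(z_{k+1})-f_\mu(x_\mu^*)\le\frac{\mu}{2}R_{k+1}^2$; the latter is what I carry to the termination step.

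Next I would handle termination. When the loop stops at index $K$, the condition $\mu R_K^2/2<\tilde\e$ combined with the companion bound gives $f_\mu(z_K)-f_\mu(x_\mu^*)<\tilde\e$, and since a tensor step never increases the objective, $f_\mu(z_K)-f_\mu(\tilde z)\le f_\mu(z_K)-f_\mu(x_\mu^*)<\tilde\e$. Applying Lemma~\ref{lem:grap_nest} to $f_\mu$ with $M=M_\mu=pM_p$ and $x=z_K$, and using the definition of $\tilde\e$, yields $\|\nabla f_\mu(\tilde z)\|_2\le\e/2$ exactly as in \eqref{eq:grad_est}. To pass from $f_\mu$ to $f$ I would bound the regularization bias $\mu\|\tilde z-x_0\|_2$: from $\|x_\mu^*-x_0\|_2\le R$ (base case) and $\frac{\mu}{2}\|\tilde z-x_\mu^*\|_2^2\le f_\mu(\tilde z)-f_\mu(x_\mu^*)\le\frac{\mu}{2}R^2$, the triangle inequality gives $\|\tilde z-x_0\|_2\le 2R$, so the choice $\mu=\e/(4R)$ makes $\mu\|\tilde z-x_0\|_2\le\e/2$. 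Finally, $\nabla f(\tilde z)=\nabla f_\mu(\tilde z)-\mu(\tilde z-x_0)$ and the triangle inequality give $\|\nabla f(\tilde z)\|_2\le\e/2+\e/2=\e$, establishing \eqref{eq:grad condition}.

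For the iteration count I would sum the inner budgets. With $R_k=R\cdot 2^{-k}$ the dominant part of $N_k$ is proportional to $2^{-k\cdot 2(p-1)/(3p+1)}$, so $\sum_k N_k$ is a convergent geometric series that telescopes to $O\big((M_pR^{p-1}/\mu)^{2/(3p+1)}\big)$; substituting $\mu=\e/(4R)$ produces the claimed leading term with the headline dependence $\e^{-2/(3p+1)}$, while the rounding $+1$'s accumulate to the number of restarts $K=O\big(\log(\mu R^2/\tilde\e)\big)$, i.e.\ the additive logarithmic term, and the extra $O(\ln(1/\e))$ oracle-per-iteration factor comes directly from Theorem~\ref{Th:main-conv}. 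I expect the main obstacle to be the inductive step: converting the objective-residual guarantee of Algorithm~\ref{alg:MSN} into an argument-residual contraction via strong convexity and checking that \eqref{eq:N_k} is calibrated exactly for the halving. A secondary delicate point is the choice $\mu=\e/(4R)$, which must simultaneously be small enough that the bias $\mu\|\tilde z-x_0\|_2$ stays below $\e/2$ and large enough to drive the contraction; one should also dispose of the degenerate case in which the stopping criterion already holds at $k=0$, so that the companion objective bound is available at the terminal iterate.
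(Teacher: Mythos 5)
Your proposal is correct and follows essentially the same route as the paper's proof: the same inductive invariant $\|z_k-x_\mu^*\|_2\le R_k$ established through Theorem~\ref{Th:main-conv} plus $\mu$-strong convexity of $f_\mu$, the same termination argument via Lemma~\ref{lem:grap_nest} with the bias bound $\mu\|\tilde z - x_0\|_2\le \e/2$, and the same geometric summation of the budgets $N_k$. Two cosmetic remarks: the inequality $f_\mu(z_K)-f_\mu(\tilde z)\le f_\mu(z_K)-f_\mu(x_\mu^*)$ follows simply from minimality of $x_\mu^*$ (i.e., $f_\mu(\tilde z)\ge f_\mu(x_\mu^*)$) rather than from monotonicity of the tensor step, and the degenerate exit at $k=0$ that you rightly flag is in fact also glossed over by the paper, whose proof asserts the companion bound ``for all $k\ge 0$'' while deriving it only for $k\ge 1$.
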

        
        \begin{proof}
        	
        	By definition of~$f_\mu(x)$, we have
        	\begin{equation}\label{eq:initial distance lower R}
        	    f(x_\mu^*) + \frac{\mu}{2} \|x_\mu^* - x_0\|_2^2 = f_\mu(x_\mu^*) \le f_\mu(x^*) = f(x^*) + \frac{\mu}{2}\|x^* - x_0\|_2^2 \leq f(x_\mu^*) + \frac{\mu}{2}\|x^* - x_0\|_2^2.
        	\end{equation}
        	Hence,~$\|x_\mu^* - x_0\|_2^2 \le \|x^* - x_0\|_2^2 \le R^2$. So, for~$k = 0$ we have~$\|x_\mu^* - z_k\|_2 \le R_k$.
        	
        	Let us assume that~$\|x_\mu^* - z_k\|_2 \le R_k$ and show that~$\|x_\mu^* - z_{k + 1}\|_2 \le R_{k + 1}$.
            Again, just like in Algorithm~\ref{alg:Delta}, we use Algorithm~\ref{alg:MSN} inside Algorithm~\ref{alg:R} and restart it every time~$A_{N_k} \ge \frac{4}{\mu}$. We can do this due to the strong convexity of~$f_\mu$.
            From strong convexity, \eqref{eq:residual upper bound by A_N} and this restart condition, it holds that
            \begin{equation}\label{eq:residual R upper bound from the stopping criterion}
                \frac{\mu}{2} \|z_{k + 1} - x_\mu^*\|_2^2 \le f_\mu(z_{k + 1}) - f_\mu(x_\mu^*) \overset{\eqref{eq:residual upper bound by A_N}}{\le} \frac{\|z_k - x^*_\mu\|^2_2}{2 A_{N_k}} \le \frac{R_k^2}{2 A_{N_k}}  \le \frac{\mu R_{k}^2}{8} = \frac{\mu R_{k + 1}^2}{2}.
            \end{equation}
        	Thus,~$\|z_{k + 1} - x_\mu^*\|_2 \le R_{k + 1},\ f_\mu(z_k) - f_\mu(x_\mu^*) \le \frac{\mu R_k^2}{2}$ for all~$k \ge 0$. 

            Since~$f_\mu(x)$ is strongly convex and~$\forall k \ge 0 \Rightarrow \|z_{k} - x_\mu^*\|_2 \le R_k$, we can apply restarts technique.
            In the same way, as in the previous subsection, we can theoretically estimate the upper bound $\tilde N_k$ on the number of  iterations for Algorithm~\ref{alg:MSN} before the stopping criterion~$A_{\tilde N_k} \ge {4}/{\mu}$ holds:
            \begin{equation*}
                f_\mu(z_{k + 1}) - f_\mu(x_\mu^*) 
                \overset{\eqref{eq:residual R upper bound from the stopping criterion}}{\le} 
                \frac{R_k^2}{2 A_{\tilde N_k}}.
            \end{equation*}
            Therefore, if we choose
            \begin{equation}\label{eq:tilde N_k for R}
                \tilde N_k = \max \Bigg\{ \Bigg\lceil \Bigg( \frac{8 c_{\po{p}} M_p R_k^{p - 1}}{\mu} \Bigg)^\frac{2}{3p + 1} \Bigg\rceil, 1 \Bigg\},
            \end{equation}
            then from \eqref{eq:A_k upper bound} we see that this number of steps is sufficient to fulfill the stopping criterion~$A_{\tilde N_k} \ge {4}/{\mu}$:
            \begin{equation*}
                f_\mu(z_{k + 1}) - f_\mu(x_\mu^*) \le \frac{R_k^2}{2 A_{\tilde N_k}} \le \frac{c M_p R_k^{p + 1}}{\tilde N_k^\frac{3p + 1}{2}} \le \frac{\mu R_{k + 1}^2}{2}.
            \end{equation*}
            
        	Next, we estimate~$\|\nabla f_\mu(\tilde z)\|_2$. \po{Since $\tilde z = T_{p, M_\mu}^{f_\mu}(z_k)$, and according to {Lemma~\ref{lem:grap_nest}}, we have}
        	\begin{equation}\label{eq:lemma 5.2 with M_mu}
                f_\mu(z_k) - f_\mu(\tilde z) \ge \frac{1}{4 (p + 2)! M_\mu^\frac{1}{p}} \|\nabla f_{\po{\mu}}(\tilde z)\|_2^\frac{p + 1}{p}
        	\end{equation}
        	At the same time,
        	\begin{equation}\label{eq:R: after outer cycle function residual}
        	   f_\mu(z_k) - f_\mu(\tilde z) \le f_\mu(z_k) - f_\mu(x_\mu^*) \le \frac{\mu R_k^2}{2} \le \tilde \e
        	\end{equation}
        	by the stopping criterion of the algorithm. By combining \eqref{eq:lemma 5.2 with M_mu} with \eqref{eq:R: after outer cycle function residual} and from the choice of~$\tilde \e$ we get that
        	\[
        	   \|\nabla f_\mu(\tilde z)\|_2 \le \frac{\e}{2}.
        	\]
            \po{
            Since the right-hand side of \eqref{eq:lemma 5.2 with M_mu} is non-negative, we can state that 
            \begin{equation}\label{eq:aux2 again}
                f_\mu(\tilde z) \le f_\mu(z_k).
            \end{equation}
            From this and the definition of a strongly convex function, we have that
            }
        	\[
        	\frac{\mu}{2}\|\tilde z - x_\mu^*\|_2^2 \le f_\mu(\tilde z) - f_\mu(x_\mu^*) \overset{\po{\eqref{eq:aux2 again}} }{\le} f_\mu(z_k) - f_\mu(x_\mu^*) \le \frac{\mu R_k^2}{2} = \frac{\mu}{2}(R \cdot 2^{-k})^2 \le \frac{\mu R^2}{2}.
        	\]
        	Thus,~$\|\tilde z - x_\mu^*\|_2 \le R$. Hence,~$\|\tilde z - x_0\|_2 \le \|\tilde z - x_\mu^*\|_2 + \|x_\mu^* - x_0\|_2 \le 2R$.
        	
        	Finally, from our choice of~$\mu$
        	\begin{equation}
        	\|\nabla f(\tilde z)\|_2 \le \|\nabla f_\mu(\tilde z)\|_2 + \mu \|\tilde z - x_0\|_2 \le \frac{\e}{2} + \mu \cdot 2R \le \e.
        	\end{equation}
        	
        	It remains to estimate the upper bound of the number of iterations of the Algorithm~\ref{alg:MSN}. Summing up the number of operations~$\tilde N_i,\ i=0,...,k$ from \eqref{eq:tilde N_k for R}, we obtain
        	\begin{equation*}
        	\sum_{i = 0}^k \tilde N_i \le \sum_{i = 0}^k \Bigg[ \bigg( \frac{8 c_{\po{p}} M_p R_i^{p - 1}}{\mu} \bigg)^\frac{2}{3 p + 1} + 1 \Bigg] = \bigg( \frac{8 c_{\po{p}} M_p R^{p - 1}}{\mu} \bigg)^\frac{2}{3 p + 1} \sum_{i = 0}^k 2^\frac{-2i (p - 1)}{3p + 1} + k 
            \end{equation*}
            Again, as in Theorem~\ref{thm:func residual}, since~$p \ge 2$,~$\sum_{i = 0}^k 2^\frac{-2i (p - 1)}{3p + 1}$ is a geometric progression with a common ratio lower than one. Therefore, we can upper bound its partial sum with its infinite sum.
            \[
                \sum_{i = 0}^k 2^\frac{-2i (p - 1)}{3p + 1} \le \frac{2}{1 - 2^\frac{-2(p - 1)}{3p + 1}} \le 2 \cdot 5 = 10.
            \]
            \begin{gather}
        	   \sum_{i = 0}^k \tilde N_i \leq \bigg( \frac{8 c_{\po{p}} M_p R^{p - 1}}{\mu} \bigg)^\frac{2}{3 p + 1} \sum_{i = 0}^k 2^\frac{-2i (p - 1)}{3p + 1} + k  \le 10\bigg( \frac{8 c_{\po{p}} M_p R^{p - 1}}{\mu} \bigg)^\frac{2}{3 p + 1} + \frac{1}{2}\log_2\frac{\mu R^2}{2 \tilde{\e}} \notag \\
        	   = O \Bigg( \frac{M_p^\frac{2}{3p + 1} R^\frac{2p}{3p + 1}}{\e^\frac{2}{3p + 1}} + \frac{1}{2}\log \frac{2^\frac{p}{p - 1} (p + 1)! M_p^\frac{1}{p}}{\e^\frac{1}{p + 1}} \Bigg). \label{eq:upper bound on R}
        	\end{gather}
            According to Theorem~\ref{Th:main-conv}, the total number of oracle calls is within the~$O\left(\ln({1}/{\e})\right)$ factor from the number of iterations of Algorithm~\ref{alg:MSN}.  
        	This completes the proof.
        \end{proof}
        
        If we omit the dominated factors in the result \eqref{eq:upper bound on R}, we obtain the complexity bound
        \begin{equation}
        \label{eq:arg_compl_simple}
            \tilde O \left( \frac{M_p R^p}{\e} \right)^\frac{2}{3p + 1}.
        \end{equation}
        Therefore, we can conclude that this bound coincides with the lower bound $$\Omega \left( \frac{M_p R^p}{\e}  \right)^\frac{2}{3p + 1},$$ from~\cite{grapiglia2020tensor} up to logarithmic and constant factors.
        
        \begin{remark}\label{rem:extension to strongly convex case}
        As a byproduct, Algorithm~\ref{alg:R} can minimize functions~$f$ that are already strongly convex. Indeed, in this case, we deal with the objective~$f$ as we now deal with the auxiliary objective~$f_{\mu}$: we apply Algorithm~\ref{alg:MSN} by epochs to~$f$ and restart when the stopping criterion holds. 
        \po{Since in this case $\mu$ is not a regularization coefficient, but just a constant of strong convexity, we do not need to subsitute $\mu$ with $\e^2 / (32 \Delta_0)$ in \eqref{eq:upper bound on Delta} and with $\e / (4R)$ in \eqref{eq:upper bound on R}. 
        Thus, we get the following complexity estimations:}
        \begin{align}\label{eq:gasnikov strongly convex estimation}
            \tilde O \left( \frac{M_p \Delta_0^\frac{p - 1}{2}}{\mu^\frac{p + 1}{2}} \right)^\frac{2}{3p + 1} \quad \text{and} \quad
        \tilde O \left(  \frac{M_p R^{p - 1}}{\mu} \right)^\frac{2}{3 p + 1}.
        \end{align}
        The main difference between Algorithms~\ref{alg:Delta} and~\ref{alg:R} is that in both cases, we use~\eqref{eq:A_k upper bound} to estimate the number of inner iterations of Algorithm~\ref{alg:MSN}, but in the first case we additionally use strong convexity to be able to upper bound argument residual with functional residual in \eqref{eq:A_k upper bound}: $R_k \leq \sqrt{({2}/{\mu}) \Delta_k}$.
        \end{remark}
        
        \begin{remark}
        \po{Let us now derive a complexity estimation for finding approximate solution to \eqref{eq:PrSt}, using Algorithm \ref{alg:R}.}
        The idea is to apply Algorithm~\ref{alg:R} to the dual problem and then use Proposition~\ref{prop:dual gradient norm and primal solution}.
        To that end, we set the following equivalence between the notation of Section~\ref{sec:motivation} and the notation of this section
       ~$\lambda \equiv z$,~$\vp(\lambda) \equiv f(z)$. We start by estimating the number of iterations of Algorithm~\ref{alg:MSN} to fulfill the first condition of Proposition~\ref{prop:dual gradient norm and primal solution}
        \begin{equation}\label{eq:dual gradient norm and primal solution, 1st cond}
            -\la \lambda_k, \nabla \vp(\lambda_k) \ra \le \e_f.
        \end{equation}
        Assume that after applying Algorithm~\ref{alg:R}, we obtain a point~$\lambda_k$ such that~\mbox{$\|\nabla \vp(\lambda_k)\|_2 \le \e$.} Then,
        \begin{equation}\label{eq:dual problem cauchi-schwarz}
            - \la \lambda_k, \nabla \vp(\lambda_k) \ra \le \|\lambda_k\|_2 \|\nabla \vp(\lambda_k)\|_2 \le \e \|\lambda_k\|_2
        \end{equation}
        From the triangle inequality, \eqref{eq:initial distance lower R}, and \eqref{eq:residual R upper bound from the stopping criterion}, we have
        \[
        \|\lambda_k\|_2 \leq \|\lambda_0\|_2 + \|\lambda_0 - \lambda_\mu^*\|_2  + \|\lambda_k - \lambda_\mu^*\|_2 
        \overset{\eqref{eq:initial distance lower R},\eqref{eq:residual R upper bound from the stopping criterion}}{\le} \|\lambda_0\|_2 + 2R,
        \]
        where we also used that from \eqref{eq:residual R upper bound from the stopping criterion}~$\|\lambda_k - \lambda_\mu^*\|_2 \leq R_k \leq R$.
        Since~$\lambda_0$ is our choice (in particular, we can start our algorithm from~$\lambda_0 = 0$), we can use this inequality to estimate~$\|\lambda_k\|_2$. 
        From the above and \eqref{eq:dual problem cauchi-schwarz}, we get
        \[
            - \la \lambda_k, \nabla \vp(\lambda_k) \ra \le \e (2R + \|\lambda_0\|_2) = \e_f. 
        \]
        Thus, if we set~$\e = \frac{\e_f}{2R + \|\lambda_0\|_2}$, we obtain that the first condition of Proposition~\ref{prop:dual gradient norm and primal solution} holds.
        If we set~$\e=\e_{eq}$, we also obtain the second condition of this proposition. 
        Setting~$\lambda_0=0$ and~$\e = \min\{\frac{\e_f}{2R},\e_{eq} \}$, and applying the bound \eqref{eq:arg_compl_simple}, we finally obtain the following complexity bound for finding an approximate solution to problem \eqref{eq:PrStGen} in the sense of \eqref{eq:e_f_e_eq-solution}
        \[
        \tilde{O} \left(\max \left\{  \left( \frac{M_p R^{p + 1}}{\e_f} \right)^\frac{2}{3p + 1}  , \left( \frac{M_p R^{p }}{\e_{eq}} \right)^\frac{2}{3p + 1} \right\}\right).
        \]
        
        
        \end{remark}
        
        
        While Algorithms~\ref{alg:Delta} and~\ref{alg:R} are shown to be near-optimal, the price of optimality of the algorithm is high. We believe that pointing out this price of optimality can lead to future research in computationally tractable approaches. Specifically, the algorithms require a line search process, which adds logarithmic terms to the complexity. Moreover, they depend on restart techniques and regularization, whose parameters depend on the desired accuracy~$\varepsilon$ and other parameters such as~$R$, which are assumed to be known.
        

\section{Primal-dual accelerated tensor method}\label{sec:primal_dual}
 
    In Section~\ref{sec:algos}, we considered methods, which search for an approximate stationary point of the dual problem, and then reconstruct an approximate solution to the primal problem. 
    Another approach to tackle \eqref{eq:PrStGen} is via primal-dual methods. The main idea of these methods is to solve both dual and primal problems until both duality gap~$|f(x(\lambda_k)) + \vp(\lambda_k)|$ and equality constraint residual of primal variable~$\|A x(\lambda_k) - b\|_2$ are lower than some accuracy~$\e$.
    
    In this section, we compare these two approaches theoretically and numerically. Hence, in this section, we propose an accelerated primal-dual tensor method (Algorithm~\ref{Alg:PDATM}) and provide its theoretical comparison with Algorithms~\ref{alg:Delta} and~\ref{alg:R} in Remark~\ref{rem:pd and gd comparison}. 
    Our proposed method uses the framework of estimating sequences~\cite{nesterov2004introduction}, where in each step it solves high-order optimization Problem~\eqref{eq:tensor step} for the dual function~$\vp$.
   
    
    First, recall formulation of the dual problem for \eqref{eq:PrStGen}
     \begin{equation}\label{eq:DualPr_again}
        \min_{\lambda \in H^*} \left\{ \vp(\lambda) := \la \lambda, b \ra + \max_{x\in Q} \left(- f(x) - \la A^\intercal \lambda,x \ra \right) \right\}.
    \end{equation}
    We have already mentioned it in Section~\ref{sec:motivation}. 
    \po{We assume the dual function $\varphi$ has $M_{p}$-Lipschitz $p$-th order derivative (see, e.g., Section \ref{sec:motivation}).}
    From the weak duality, the following inequality follows
    \begin{equation}
        f(x^*) \geq -\vp(\lambda^*),
    \label{eq:wD}
    \end{equation}
    where~$f(x^*)$ and~$\vp(\lambda^*)$ are the optimal function values in~\eqref{eq:PrStGen} and~\eqref{eq:DualPr} respectively.

    Assume the dual problem \eqref{eq:DualPr_again} has a solution~$\lambda^*$ (which holds, e.g., when the strong duality holds), and there exists some~$R >0$ such that
    	\begin{equation}
    	\|\lambda^{*}\|_{2} \leq R < +\infty. 
    	\label{eq:l_bound}
    	\end{equation}	 
    It is worth noting that the quantity~$R$ will be used only in the convergence analysis but not in the algorithm itself. 

    To solve the dual Problem~\eqref{eq:DualPr_again}, we introduce the Primal-Dual Accelerated Tensor Method (Algorithm~\ref{Alg:PDATM}).
    \begin{algorithm}[t]
    \caption{Primal-Dual Accelerated Tensor Method}
    \label{Alg:PDATM}
    {\small
    \begin{algorithmic}[1]
    		\REQUIRE~$\e_f$,~$\e_{eq}$,~$M > M_p$.
    
    		\STATE Set~$k=0$, ~$\lambda_0 = 0$,~$\psi_{0}(\lambda) =  \frac{C}{(p+1)!} \norm{\lambda - \lambda_{0}}_2^{p+1}$, where ~$C = \tfrac{p}{2}\sqrt{\tfrac{p+1}{p-1}(M^2 - M_p^2)}$.
    		\REPEAT
    			\STATE Compute~$v_k = \argmin\limits_{\lambda }\psi_k(\lambda).$
    			\STATE~$A_{k} = \left[\tfrac{(p-1)(M^2-M_p^2)}{4(p+1)p^2M^2} \right]^{\frac{p}{2}}\left(\tfrac{k}{p+1} \right)^{p+1}$,~$a_k = A_{k+1} - A_{k}$.
    			
    			\STATE~$y_k = \frac{A_k}{A_{k+1}}\lambda_k + \frac{a_k}{A_{k+1}}v_k$.
    			
    			\STATE Compute~$\lambda_{k+1} = T^{\vp}_{p,M}(y_k)$.
    			
                \STATE
    			\begin{equation*}
    			\psi_{k+1}(\lambda) = \psi_{k}(\lambda) + (A_{k+1}-A_{k})\left[\vp(\lambda_{k+1}) + \left<\nabla \vp(\lambda_{k+1}), \lambda - \lambda_{k+1} \right> \right].
    			\end{equation*}

    			\STATE
    			\begin{equation*}
    			\hat x_{k+1} = \frac{1}{A_{k+1}}\sum_{i=0}^{k}a_{i}x(\lambda_{i+1})=\frac{a_{k}x(\lambda_{k+1}) + A_{k}\hat x_{k}}{ A_{k+1}}
    			\end{equation*}
    			
    			\STATE Set~$k = k + 1$.
    			
    		\UNTIL {$|f(\hat x_{k}) + \vp(\lambda_{k})| \leq \e_f$,~$\|A \hat{x}_k - b \|_2 \leq \e_{eq}$}.
    		\RETURN~$\hat x_{k}$,~$\lambda_{k}$.	
    \end{algorithmic}
    }
    \end{algorithm}
     To prove the main result about the convergence of Algorithm~\ref{Alg:PDATM}, we need the following auxiliary lemmas. 
     
    \begin{lemma}[Corollary 1 in ~\cite{nesterov2021implementable}]
    \label{lem_4}
    For any~$\lambda \in H^*$ and~$M \geq  M_p$ we have 
    \begin{eqnarray}
    \label{eq_lem_2}
        \langle \nabla\varphi(T^{\varphi}_{p,M}(\lambda)), \lambda - T^{\varphi}_{p,M}(\lambda) \rangle \geq \tfrac{c(p)}{M}[M^2 - M_p^2]^{\frac{p-1}{2p}}\|\nabla\varphi(T^{\varphi}_{p,M}(\lambda))\|_{2}^{\frac{p+1}{p}},
    \end{eqnarray}
    where~$c(p) = \tfrac{p}{p-1}\left[\tfrac{p-1}{p+1}\right]^{\frac{1-p}{2p}}[(p+1)!]^{\frac{1}{p}}$.
    \end{lemma} 

    \begin{lemma}[Lemma 2 in~\cite{nesterov2008accelerating}]
        Let~$\sigma > 0$ be some constant. Then, for any~$h \in E$ and~$s \in E$, we have
        \begin{equation}\label{eq:pd:estimating sequences last step inequality}
            \la s, h \ra + \frac{1}{p} \sigma \|h\|_2^p \ge -\frac{p - 1}{p} \left( \frac{1}{\sigma} \right)^\frac{1}{p - 1} \|s\|_2^\frac{p}{p - 1}.
        \end{equation}
    \end{lemma}
    
    Let us introduce the following estimating functions, which are recursively updated as 
    \begin{eqnarray}
    \label{psi_rec}
        \forall k \ge 0 \Rightarrow \psi_{k+1}(\lambda) =  \psi_{k}(\lambda) + a_{k}\left[\vp(\lambda_{k+1}) + \left<\nabla \vp(\lambda_{k+1}), \lambda - \lambda_{k+1} \right> \right]
    \end{eqnarray}
    with~$\psi_{0}(\lambda) =  \frac{C}{(p+1)!} \norm{\lambda - \lambda_{0}}_{2}^{p+1}$, where~$C = \tfrac{p}{2}\sqrt{\tfrac{p+1}{p-1}(M^2 - M_p^2)}$.

    \begin{theorem}
    \label{th1}
    If sequence~$\{\lambda_k\}_{k=0}^{\infty}$ is generated by Algorithm~\ref{Alg:PDATM}, then for all~$k \geq 0$ we have 
    \begin{equation}
    \label{varphi_estimation}
        A_{k}\vp(\lambda_{k}) \leq \min\limits_{\lambda \in H^*} \psi_k(\lambda).
    \end{equation}
    \end{theorem}
    
    \begin{proof}
        Let us prove the relation \eqref{varphi_estimation} by induction over~$k$. Since~$A_{0} = 0$, for~$k=0$ we obtain:
       ~$$
        0 = A_0\vp(\lambda_{0})\leq \min\limits_{\lambda \in H^*}\frac{C}{(p+1)!} \norm{\lambda - \lambda_{0}}_{2}^{p+1} = 0.
       ~$$
        Assume that \eqref{varphi_estimation} is true for some~$k > 0$. Denote
       ~$$
            \psi_{k}(\lambda) \equiv l_k(\lambda) + \frac{C}{(p+1)!} \norm{\lambda - \lambda_{0}}_{2}^{p+1} \quad k \geq 0,
       ~$$
        where~$l_0(\lambda) \equiv 0$.
        
        Using Lemma 4 in~\cite{nesterov2008accelerating} we obtain that
        \begin{eqnarray*}
            \psi_{k}(\lambda) &\geq& \min\limits_{\lambda \in H^*}  \psi_{k}(\lambda) + \tfrac{C}{(p+1)!}\cdot \left(\tfrac{1}{2} \right)^{p-1} \norm{\lambda - v_k}_{2}^{p+1} \\
            & \geq & A_{k}\vp(\lambda_{k}) + \tfrac{C}{(p+1)!}\cdot \left(\tfrac{1}{2} \right)^{p-1} \norm{\lambda - v_k}_{2}^{p+1}.
        \end{eqnarray*}
        Denote~$\sigma_{p + 1} = \frac{C}{p!} \left(\frac{1}{2}\right)^{p - 1}$. Then, from this inequality and~$\vp(\lambda_k) - \vp(\lambda_{k + 1}) \ge \la \nabla \vp(\lambda_{k + 1}, \lambda_k - \lambda_{k + 1} \ra$, we get
        \begin{eqnarray}
        \label{eq:th_pd}
            && \psi_{k+1}^* =  \min\limits_{\lambda \in H^*} \left\{ \psi_{k}(\lambda) + a_k[\vp(\lambda_{k+1}) + \langle \nabla \vp(\lambda_{k+1}), \lambda - \lambda_{k+1} \rangle ] \right\} \notag \\ &\geq&   \min\limits_{\lambda \in H^*} \Bigl\{ A_{k}\vp(\lambda_{k}) + \tfrac{\sigma_{p+1}}{(p+1)} \norm{\lambda - v_k}_{2}^{p+1} + a_k[\vp(\lambda_{k+1}) + \langle \nabla \vp(\lambda_{k+1}), \lambda - \lambda_{k+1} \rangle ] \Bigr\}  \notag  \\ &\geq &  \min\limits_{\lambda \in H^*} \Bigl\{ A_{k+1} \vp(\lambda_{k+1}) + A_k \langle \nabla \vp(\lambda_{k+1}), \lambda_{k} - \lambda_{k+1}  \rangle  \notag  \\ & + &  a_k \langle \nabla \vp(\lambda_{k+1}), \lambda - \lambda_{k+1}  \rangle  +  \tfrac{\sigma_{p+1}}{(p+1)} \norm{\lambda - v_k}_{2}^{p+1} \Bigr\}  
        \end{eqnarray}
        Note, that~$y_{k}=\frac{A_{k}}{A_{k+1}}\lambda_{k}+\frac{a_{k}}{A_{k+1}}v_{k}$. Hence,~$A_{k}\lambda_{k}=A_{k+1}y_{k}-a_{k}v_{k}$, and 
        \begin{equation*}
            A_{k}\left<\nabla \vp(\lambda_{k+1}), \lambda_k - \lambda_{k+1} \right> =  \left<\nabla \vp(\lambda_{k+1}), A_{k+1}y_{k}-a_{k}v_{k} - A_{k} \lambda_{k+1} \right>.
        \end{equation*}
        Thus, we can rewrite~\eqref{eq:th_pd} as follows
        \begin{eqnarray}
             \psi_{k+1}^*  &\geq&   \min\limits_{\lambda \in H^*} \Bigl\{ A_{k+1} \vp(\lambda_{k+1}) +  \left<\nabla \vp(\lambda_{k+1}), A_{k+1}y_{k}-a_{k}v_{k} - A_{k} \lambda_{k+1} \right>   \notag \\ 
             & + &  a_k \langle \nabla \vp(\lambda_{k+1}), \lambda - \lambda_{k+1}  \rangle  +  \tfrac{\sigma_{p+1}}{(p+1)} \norm{\lambda - v_k}_{2}^{p+1} \Bigr\}  \notag \\ 
             & = & \min\limits_{\lambda \in H^*} \Bigl\{ A_{k+1} \vp(\lambda_{k+1}) +  A_{k+1} \left<\nabla \vp(\lambda_{k+1}), y_{k}- \lambda_{k+1} \right> \notag \\ 
             & + &  a_k \langle \nabla \vp(\lambda_{k+1}), \lambda - v_{k}  \rangle  +  \tfrac{\sigma_{p+1}}{(p+1)} \norm{\lambda - v_k}_{2}^{p+1} \Bigr\} \label{eq:pd:th_pd_2}
        \end{eqnarray}
        Further, if we choose~$M \geq M_{p}$, then by inequality~\eqref{eq_lem_2} we have 
        \begin{equation}\label{eq:pd:gradient on distance lower bound}
            \langle \nabla \varphi(\lambda_{k+1}), \lambda - \lambda_{k+1} \rangle \geq \tfrac{c(p)}{M}[M^2 - M_p^2]^{\frac{p-1}{2p}}\|\nabla \varphi(\lambda_{k+1})\|_{2}^{\frac{p+1}{p}}.
        \end{equation}
        If we apply \eqref{eq:pd:gradient on distance lower bound} to \eqref{eq:pd:th_pd_2}, we get
        \begin{align*}
             \psi_{k + 1}^* \ge \min_{\lambda \in H^*} \Big\{ A_{k + 1} \vp(\lambda_{k + 1}) &+ A_{k+1}\tfrac{c(p)}{M}[M^2 - M_p^2]^{\frac{p-1}{2p}}\|\nabla \varphi(\lambda_{k+1})\|_{2}^{\frac{p+1}{p}} \\
             &+ a_k \langle \nabla \vp(\lambda_{k+1}), \lambda - v_{k}  \rangle +  \tfrac{\sigma_{p+1}}{(p+1)} \norm{\lambda - v_k}_{2}^{p+1} \Big\}.
        \end{align*}
        Now, denote everything on the right-hand side except~$A_{k + 1} \vp(\lambda_{k + 1})$ as~$\zeta(\lambda)$:
        \[
            \zeta(\lambda) \equiv A_{k+1}\tfrac{c(p)}{M}[M^2 - M_p^2]^{\frac{p-1}{2p}}\|\nabla \varphi(\lambda_{k+1})\|_{2}^{\frac{p+1}{p}} + a_k \langle \nabla \vp(\lambda_{k+1}), \lambda - v_{k} \rangle + \frac{\sigma_{p + 1}}{p + 1}\|\lambda - v_k\|_{2}^{p + 1}.
        \]
        Thus,
        \[
            \psi_{k + 1}^* \ge \min_{\lambda \in H^*} \Big\{ A_{k + 1} \vp(\lambda_{k + 1}) + \zeta(\lambda) \Big\}.
        \]
        
        To prove \eqref{varphi_estimation}, we need to have~$\zeta(\lambda) \ge 0$.
        Using \eqref{eq:pd:estimating sequences last step inequality}, we get
        \begin{equation*}
            \zeta(\lambda) \ge 
            A_{k+1}\tfrac{c(p)}{M}[M^2 - M_p^2]^{\frac{p-1}{2p}}\|\nabla \varphi(\lambda_{k+1})\|_{2}^{\frac{p+1}{p}} - \frac{p}{p + 1} \left( \frac{1}{\sigma_{p + 1}} \right)^\frac{1}{p} a_k^\frac{p + 1}{p} \|\nabla \vp(\lambda_{k + 1}) \|_{2}^\frac{p + 1}{p}.
        \end{equation*}
        Therefore, to have~$\zeta(\lambda) \ge 0$ we need
        \begin{gather*}
            A_{k+1}\tfrac{c(p)}{M}[M^2 - M_p^2]^{\frac{p-1}{2p}} \ge \frac{p}{p + 1} \left( \frac{1}{\sigma_{p + 1}} \right)^\frac{1}{p} a_k^\frac{p + 1}{p}.
        \end{gather*}
        Next we substitute in this inequality the values of~$c(p)$ and~$\sigma_{p + 1}$ and after all the constellations we get
        \[
            A_{k + 1} \sqrt{1 - \frac{M_p^2}{M^2}} \left(\frac{C^2}{M^2 - M_p^2} \right)^\frac{1}{2p} \ge 2 a_k^\frac{p + 1}{p} \left( \frac{p}{2} \sqrt{\frac{p + 1}{p - 1}} \right)^\frac{1}{p} \sqrt{\frac{p + 1}{p - 1}}.
        \]
        Finally, from our choice of~$C$, we get
        \begin{eqnarray}
        \label{eq_estimation}
             A_{k+1} \ge 2 \sqrt{\tfrac{(p+1)M^2}{(p-1)(M^2-M_p^2)}}a_k^{\frac{p+1}{p}}.
        \end{eqnarray}
        And since for~$k \geq 0$
        \begin{eqnarray*}
             A_{k} = \left[\tfrac{(p-1)(M^2-M_p^2)}{4(p+1) M^2} \right]^{\frac{p}{2}}\left(\tfrac{k}{p+1} \right)^{p+1}, \quad a_k = A_{k+1} - A_{k}. 
        \end{eqnarray*}
        inequality \eqref{eq_estimation} holds. It is described in more detail in~\cite{nesterov2021implementable} (everything from eq. (3.8) to eq. (3.11)).
        Eventually, 
        \[
            \psi_{k + 1}^* \ge \min_{\lambda \in H^*} \Big\{ A_{k + 1} \vp(\lambda_{k + 1}) + \zeta(\lambda) \Big\} \ge A_{k + 1} \vp(\lambda_{k + 1}),
        \]
        that completes the induction argument
    \end{proof}
    
    We can now estimate the proposed algorithm's complexity. Consider the set~$\Lambda_R =\{\lambda: \|\lambda\|_2 \leq 2R\}$ where~$R$ is given in \eqref{eq:l_bound}. From the Theorem~\ref{th1} and since~$\lambda_0 = 0$ we obtain 
    \begin{align}
        A_{k}\vp(\lambda_{k}) &\leq \min\limits_{\lambda}\left\{ \sum_{i = 0}^{k-1} a_{i}\left[\vp(\lambda_{i+1}) + \left<\nabla \vp(\lambda_{i+1}), \lambda - \lambda_{i+1} \right> \right]+ \frac{C}{(p+1)!} \norm{\lambda}^{p+1}_{2}\right\} \notag \\
        &\leq \min\limits_{\lambda \in \Lambda_R}\left\{ \sum_{i = 0}^{k-1} a_{i}\left[\vp(\lambda_{i+1}) + \left<\nabla \vp(\lambda_{i+1}), \lambda - \lambda_{i+1} \right> \right]+ \frac{C}{(p+1)!} \norm{\lambda }^{p+1}_{2}\right\} \notag \\
        & \leq \min\limits_{\lambda \in \Lambda_R}\left\{ \sum_{i = 0}^{k-1} a_{i}\left[\vp(\lambda_{i+1}) + \left<\nabla \vp(\lambda_{i+1}), \lambda - \lambda_{i+1} \right> \right]\right\}+ \frac{C(2R)^{p+1}}{(p+1)!}.
    \label{eq:proof_st_1}
    \end{align}
    On the other hand, from the definition \eqref{eq:DualPr_again} of~$\vp(\lambda)$, we have
    \begin{eqnarray*}
        \vp(\lambda_i) & = &\la \lambda_i, b \ra \notag + \max_{x\in Q} \left( -f(x) - \la A^\intercal \lambda_i ,x \ra \right) 
    \\ & = &\la \lambda_i, b \ra - f(x(\lambda_i)) - \la A^\intercal \lambda_i ,x(\lambda_i) \ra . \notag
    \end{eqnarray*}
    And since~$\nabla \vp(\lambda) = b - Ax(\lambda)$, we obtain
    \begin{align}
        \vp(\lambda_i) - \la \nabla \vp (\lambda_i), \lambda_i \ra & = \la \lambda_i, b \ra  - f(x(\lambda_i)) - \la A^\intercal \lambda_i ,x(\lambda_i) \ra \notag \\
        & \hspace{1em} - \la b-A x(\lambda_i),\lambda_i \ra  = - f(x(\lambda_i)). \notag
    \end{align}
    Summing these inequalities from~$i=0$ to~$i=k-1$ with the weights~$\{\alpha_i\}_{i=0,...k-1}$, we get, using the convexity of~$f$
    \begin{align}
        &  \sum_{i=0}^{k-1}{\alpha_i \left( \vp(\lambda_{i+1}) + \la \nabla \vp(\lambda_{i+1}), \lambda-\lambda_{i+1} \ra \right) }  \notag \\
        & = -\sum_{i=0}^{k-1} \alpha_i f(x(\lambda_{i+1})) + \sum_{i=0}^{k-1} \alpha_i \la b - Ax(\lambda_{i+1}), \lambda \ra  \notag \\
        & \leq -A_{k}f(\hat{x}_k) + A_{k} \la b - A \hat{x}_k, \lambda \ra, \notag
    \end{align}
    where~$\hat x_{k} = \frac{1}{A_{k}}\sum\limits_{i=0}^{k-1}a_{i}x(\lambda_{i+1})$. Substituting this inequality to \eqref{eq:proof_st_1}, we obtain
    \begin{align}
        A_k \vp(\lambda_k)  \leq &-A_{k}f(\hat{x}_k) + A_{k}  \min_{\lambda \in \Lambda_R} \left\{ \la b - A \hat{x}_k, \lambda \ra  \right\} +  \frac{C(2R)^{p+1}}{(p+1)!}. \notag
    \end{align} 
    Finally, since 
    \begin{align}
        &\max_{\lambda \in \Lambda_R} \left\{   \la A \hat{x}_k - b, \lambda \ra  \right\}  =2 R \|A \hat{x}_k - b \|_2 , \notag
    \end{align} 
    we obtain
    \begin{equation}
    \vp(\lambda_k) + f(\hat{x}_k) +2 R \|A \hat{x}_k - b \|_2   \leq \frac{C(2R)^{p+1}}{A_k (p+1)!}.
    \label{eq:vpmfxh}
    \end{equation}
    Since ~$\lambda^*$ is an optimal solution of dual problem ~\eqref{eq:DualPr_again}, we have, for any~$x \in Q$
   ~$$
    f(x^*) \leq f(x) + \la  \lambda^{*}, Ax - b \ra.
   ~$$
    Using the assumption \eqref{eq:l_bound} we get
    \begin{equation}
    f(\hat{x}_k) \geq f(x^*)- R \|A \hat{x}_k - b \|_2.
    \label{eq:fxhat_est}
    \end{equation}
    Hence,
    \begin{align}
     \vp(\lambda_k) + f(\hat{x}_k)  & = \vp(\lambda_k) - \vp(\lambda^*)+\vp(\lambda^*) + f(x^*)  - f(x^*) + f(\hat{x}_k)  \notag \\
    &  \stackrel{\eqref{eq:wD}}{\geq}  - f(x^*) + f(\hat{x}_k) \stackrel{\eqref{eq:fxhat_est}}{\geq} - R \|A \hat{x}_k - b \|_2.
    \label{eq:aux1}
    \end{align}
    This and \eqref{eq:vpmfxh} give
    \begin{equation}
    R \|A \hat{x}_k - b \|_2  \leq \frac{C(2R)^{p+1}}{A_k (p+1)!}.
    \label{eq:R_norm_est}
    \end{equation}
    Hence, we obtain
    \begin{equation}
     -\frac{C(2R)^{p+1}}{A_k (p+1)!} \stackrel{\eqref{eq:aux1},\eqref{eq:R_norm_est}}{\leq} \vp(\lambda_k) + f(\hat{x}_k) \stackrel{\eqref{eq:vpmfxh}}{\leq} \frac{C(2R)^{p+1}}{A_k (p+1)!} .
    \label{eq:vppfxhatgeq}
    \end{equation}
    Combining \eqref{eq:R_norm_est} and  \eqref{eq:vppfxhatgeq}, we conclude
    \begin{eqnarray*}
    R\|A \hat{x}_k - b \|_2 \leq \frac{C(2R)^{p+1}}{A_k (p+1)!}, \quad  |\vp(\lambda_k) + f(\hat{x}_k)|  \leq \frac{C(2R)^{p+1}}{A_k (p+1)!}.
    \end{eqnarray*}
    Finally, if we put the value of~$A_k$, defined in Algorithm~\ref{Alg:PDATM}, we will get the total complexity of the Algorithm~\ref{Alg:PDATM}. Therefore, we have just proved the following theorem.
    
    \begin{theorem}\label{th2}
        Assume the function~$\varphi$ from \eqref{eq:DualPr_again} is convex,~$p$ times differentiable on~$\R^m$ with~$M_p$-Lipschitz~$p$-th derivative. Additionally, if~$\lambda^* = \arg \min_{\lambda \in H^*} \vp(\lambda)$, assume~$\exists R > 0: \| \lambda^* \|_2 \le R \le \infty$. Let Algorithm~\ref{Alg:PDATM} be run for~$k$ steps with starting point~$\lambda_0 = v_0 = z_0 = 0$.
        Denote~$\hat x_{k} = \frac{1}{A_{k}}\sum\limits_{i=0}^{k-1}a_{i}x(\lambda_{i+1})$. Then
        \begin{align*}
            &\|A \hat{x}_k - b \|_2 \leq \dfrac{C_1 R^{p}}{k^{p+1}}, \notag \\
            &|\vp(\lambda_k) + f(\hat{x}_k)| \leq \dfrac{C_1 R^{p+1}}{k^{p+1}} .
        \end{align*}
        Here~$C_1=\tfrac{4^p M^p }{(p-1)!}\sqrt{\tfrac{(p+1)^{3p+3}}{(M^2 - M_p^2)^{p-1}(p-1)^{p+1}}}$.
    \end{theorem}

    The result of the above Theorem can be written in terms of complexity in the following way. Assume that the goal is to find an approximate solution that satisfies inequalities \eqref{eq:e_f_e_eq-solution}. Then, Theorem~\ref{th2} states that such a point can be found in a number of iterations not exceeding
    \[
    O \left(\max \left\{  \left( \frac{M_p R^{p + 1}}{\e_f} \right)^\frac{1}{p + 1}  , \left( \frac{M_p R^{p }}{\e_{eq}} \right)^\frac{1}{p + 1} \right\}\right).
    \]
    It is an open question whether it is possible to obtain a primal-dual tensor method with complexity bounds that depend on~$\e^{-2/(3p+1)}$ rather than~$\e^{-1/(p+1)}$.
    
    \begin{remark}\label{rem:pd and gd comparison}
   \cu{ Let us now discuss Algorithm~\ref{Alg:PDATM} compared to Algorithms~\ref{alg:Delta} and~\ref{alg:R}. On the one hand, complexity~$O(\e^{-1/(p+1)})$ of the former  has \textit{asymptotically} worse dependence on~$\e$ than the complexity bound~$\tilde{O}(\e^{-2/(3p+1)})$ of the latter. On the other hand, the difference in the power of~$\e$ is quite small, and the second bound has an additional logarithmic multiplier. Thus, the bound for Algorithms~\ref{alg:Delta} and~\ref{alg:R} may be only slightly better than the bound for Algorithm~\ref{Alg:PDATM}.
    Further, Algorithm~\ref{Alg:PDATM} is a direct algorithm that does not use regularizations and restarts. Unlike it, Algorithms~\ref{alg:Delta} and~\ref{alg:R} use a regularization that may be so small that it will cause some numerical instabilities. In Section~\ref{sec:numerics}, we compare both approaches numerically. At the same time, Algorithms~\ref{alg:Delta} and~\ref{alg:R} are interesting not only in application to problem \eqref{eq:PrStGen}. These methods achieve nearly-optimal complexity bounds for finding approximate stationary points of convex functions, nearly closing the theoretical gap. Some other motivations for developing efficient methods for finding stationary points can be found in \cite{grapiglia2020tensor}. In particular, the norm of the gradient is a natural and computable measure of optimality. }
    \end{remark}

    \begin{remark}\label{rem:extensions}
    Let us discuss a possible extension of the proposed methods. One straightforward generalization is a near-optimal method for minimizing the norm of objective with H\"older-continuous gradient, i.e.,  for some~$\nu \in [0,1]$ satisfying
    \[
    \|\nabla^p f(x) - \nabla^p f(y)\|_2 \leq M_{p,\nu} \|x-y\|_2^{\nu}, x,y \in \R^n.
    \]
    The idea is to combine the near-optimal tensor method for minimization of functions with H\"older-continuous~$p$-th derivatives~\cite{song2019towards} with Lemma 5.2 in~\cite{grapiglia2020tensor} for general~$\nu$. This approach allows obtaining complexity bounds which, up to logarithmic and constant factors, coincide with the lower bounds in~\cite{grapiglia2020tensor}. 
    
    Another possible extension is an inexact solution of the auxiliary subproblems and adaptation to the constant~$M_{p,\nu}$~\cite{grapiglia2020tensor}. Importantly, the basic Algorithm~\ref{alg:MSN} is adaptive to~$M_p$. Nevertheless, to apply the regularization technique with parameter~$\mu$, we need to know~$M_p$. Thus, it is desirable to overcome this drawback.
    
    Finally, in our Algorithm~\ref{Alg:PDATM} we use  Nesterov acceleration based on the estimating sequence technique (see~\cite{nesterov2004introduction, nesterov2021implementable}). It is still an open question whether we can obtain a better high-order primal-dual method using the Monteiro-Svaiter acceleration~\cite{monteiro2013accelerated} or optimal tensor method~\cite{kovalev2022first}.
    \end{remark}
    
\section{Numerical analysis}\label{sec:numerics}

    \cu{This section presents several simulations for proposed methods. Particularly, we implement Algorithm~\ref{alg:Delta} for the logistic regression problem on both synthetic and real data sets. Also, we show the performance of Algorithm~\ref{alg:Delta} on a family of functions recently described as difficult for all tensor methods~\cite{nesterov2021implementable}. We focus on the case where~$p=3$ for which we have efficient methods for solving the auxiliary subproblem~\cite[Section 5]{nesterov2021implementable}. Finally, we present the performance results for the entropy regularized optimal transport and minimal mutual information problems.}
    
    \subsection{Logistic Regression}
    
        For the logistic regression problem, we are given a set of~$d$ data pairs~$\{y_i,w_i\}$ for~$1\leq i\leq d$, where~$y_i \in \{1, -1\}$ is the class label of object~$i$, and~$w_i \in \mathbb{R}^n$ is the set of features of object~$i$. After the dimension and number of data points are set. The optimal point is generated as~$x^*$ composed in each dimension as samples from a uniform distribution in the range~$[-1,1]$. Each dimension per data sample is also generated as samples from a uniform distribution in the range~$[-1,1]$ with the last feature set to~$1$ for all data points. The label is generated as the sign of the products of the features and~$x^*$. Finally, labels are flipped with a probability of~$0.01$. We are interested in finding a vector~$x$ that solves the following optimization problem 
        \begin{align}\label{eq:logistic loss}
                \min_{x \in \mathbb{R}^n} \frac{1}{d}\sum\limits_{i = 1}^d \ln\Bigl(1 + \exp\bigl(-y_i\langle w_i, x \rangle\bigr)\Bigr). 
        \end{align}
        
        Figure~\ref{fig:sinthetic} shows the gradient norm of the logistic regression function at the points generated by Algorithm~\ref{alg:Delta}. Initially, we show the results for synthetic data where~$d=100$ and~$n=10$. We focus on showing the results for different values of~$\e$. Here by Iterations we mean the number of iterations of Algorithm~\ref{alg:MSN} inside Algorithm~\ref{alg:Delta}, line 3. For implementation simplicity, in addition to stopping criterion~$A_{N_k} \ge \frac{2}{\mu}$, if the gradient is no longer decreasing, we apply the restarting of Algorithm~\ref{alg:MSN} after~$500$ iterations.   
        
        \begin{figure}[th!]
        	\centering
        	\includegraphics[width=0.7\linewidth]{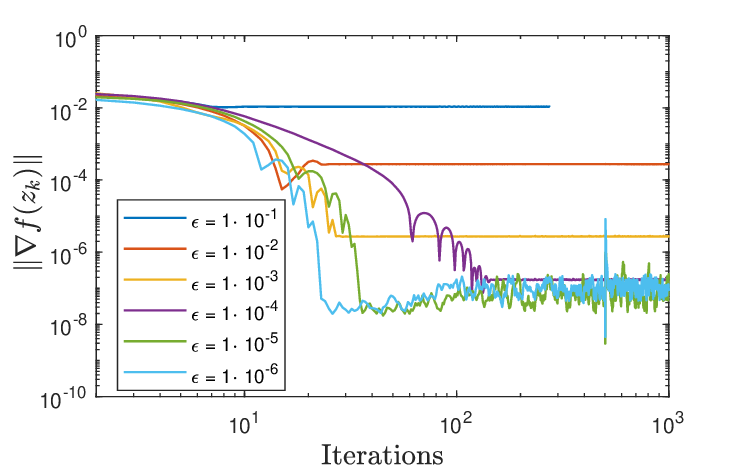}
        	\caption{Gradient norm at the iterations generated by Algorithm~\ref{alg:Delta} on synthetic data for various values of~$\e$. }
        	\label{fig:sinthetic}
        \end{figure}
        
        Figure~\ref{fig:real} shows the gradient norm of the logistic regression function at the points generated by Algorithm~\ref{alg:Delta}. In this case, we use the \texttt{Mushroom}, \texttt{A9A}, \texttt{Covertype} and \texttt{IJCNN1} datasets from ~\cite{Dua:2019} with a fixed value of~$\e = 1\cdot 10^{-5}$.
        
        \begin{figure}[th!]
        	\centering
        	\includegraphics[width=0.7\linewidth]{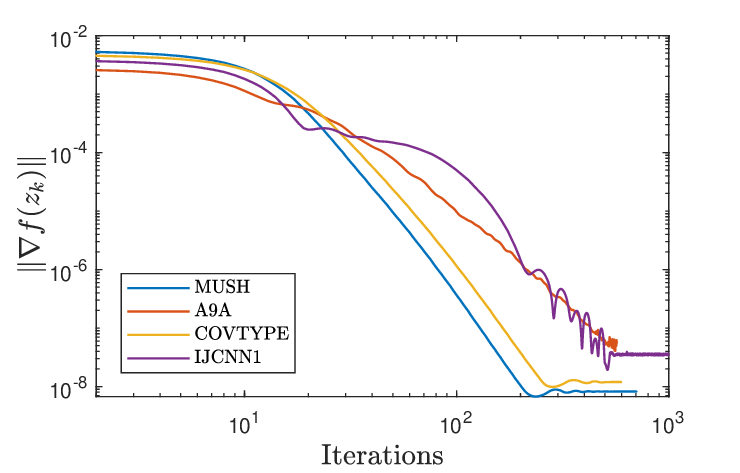}
        	\caption{Gradient norm at the iterations generated by Algorithm~\ref{alg:Delta} on real data sets from~\cite{Dua:2019} with~$\e=1\cdot 10^{-5}$. }
        	\label{fig:real}
        \end{figure}
        
        \subsection{A family of difficult functions}
        
        Next, we analyze the performance of the proposed algorithm on a universal parametric family of objective functions, which are difficult for all tensor methods~\cite{nesterov2021implementable,grapiglia2020tensor} defined as
        \begin{align}\label{eq:bad_functions}
                f_m(x) = \eta_{p + 1} \left ( A_m x \right ) - x_1,
        \end{align}
        where, for integer parameter~$p \, \ge \, 1$,
       ~$\eta_{p + 1} (x) = \frac{1}{p+1} \sum\limits_{i = 1}^ n
        | x_i |^{p + 1}$,~$2 \, \le \, m \, \le \, n$,~$x \, \in \, \mathbb{R}^n$,
       ~$A_m$ is the~$n \times n$ block diagonal matrix:
        \begin{align}
              A_m = 
        \left ( \begin{array}{cc}
        U_m & 0 \\
        0 & I_{n - m}
        \end{array}
        \right ), \quad \text{with} \quad U_m = 
        \left ( \begin{array}{ccccc}
        1 & -1 & 0 & \hdots & 0 \\
        0 & 1 & -1 &  \hdots & 0 \\
        \vdots & \vdots & \ddots &  & \vdots \\
        0 & 0 &  \hdots & 1 & -1 \\
        0 & 0 &  \hdots & 0 & 1
        \end{array}
        \right ),  
        \end{align}
        and~$I_n$ is the identity~$n \times n$-matrix. For a detailed description of the high-order derivatives of this class of functions and its optimality properties, see~\cite{nesterov2021implementable}.
        
        Finally, Figure~\ref{fig:bad_fun} shows the performance results of Algorithm~\ref{alg:Delta} on the family of functions in \eqref{eq:bad_functions} with~$p=3$ and various values of parameters~$m=n$ with~$\e = 1\cdot 10^{-5}$.
        
        \begin{figure}[th!]
        	\centering
        	\includegraphics[width=0.7\linewidth]{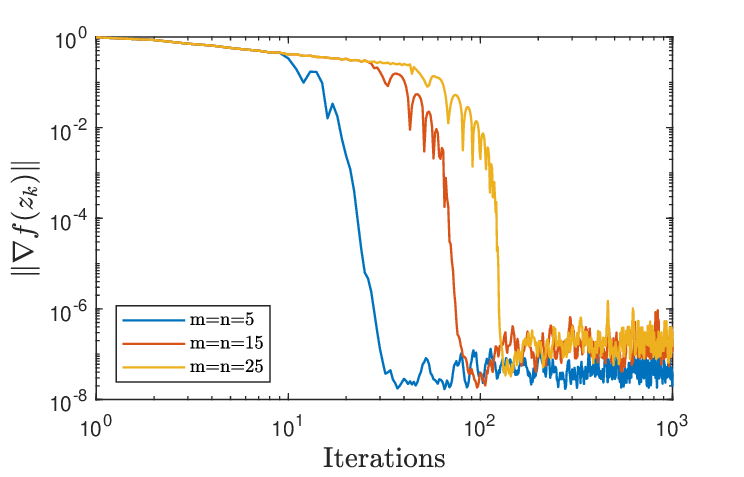}
        	\caption{Gradient norm at the iterations generated by Algorithm~1 on the family of functions in \eqref{eq:bad_functions} with~$p=3$ and various values of parameters~$m=n$ with~$\e = 1\cdot 10^{-5}$. }
        	\label{fig:bad_fun}
        \end{figure}

    \subsection{The entropy regularized optimal transport problem}
    
        We now go back to the entropy-regularized optimal transport problem in~\eqref{eq:wass} and present some numerical experiments of the proposed method applied to its dual problem in~\eqref{eq:OT-PD}.
        
        \begin{align*}
            \phi(\lambda) & =  \textbf{smax}_\gamma(A^\intercal\lambda-M) + \la \lambda, b\ra,
        \end{align*}
        
        Initially, we recall some properties of the Softmax function, which will be useful for the implementation; for a complete analysis of such properties, see~\cite{bullins2019higher}.
        
        \begin{proposition}
            Consider the softmax function in~\eqref{eq:softmax}, and define the function~$f(x)= \textbf{smax}_\gamma(Ax- b) -  \la \lambda, b\ra$. Then the following properties hold:
            \begin{align*}
            \nabla \textbf{smax}_\mu(x)_i &  = \exp\left( \frac{x_i}{\mu}\right)\Big/\left(\sum_i \exp\left( \frac{x_i}{\mu}\right)\right)\\
            \nabla^2 \textbf{smax}_\mu(x) & = \frac{1}{\mu} \big( \text{diag} \big ( \nabla\textbf{smax}_\mu(x) \big)- \nabla\textbf{smax}_\mu(x)\nabla \textbf{smax}_\mu(x)^\intercal \big) \\
            \nabla^3 \textbf{smax}_\mu(x)[h,h] & = \frac{1}{\mu} \big( \nabla^2\textbf{smax}_\mu(x)[h^2] - 2\langle \nabla\textbf{smax}_\mu(x),h \rangle\nabla^2\textbf{smax}_\mu(x)[h] \big),
        \end{align*}
        and
        \begin{align*}
            \nabla f(x) & = A^\intercal\nabla \textbf{smax}_\mu(Ax-b)\\
            \nabla^2 f(x) & = A^\intercal\nabla^2 \textbf{smax}_\mu(Ax-b)A\\
            \nabla^3 f(x)[h,h] & = A^\intercal\nabla^3 \textbf{smax}_\mu(Ax-b) [Ah,Ah].
            \end{align*}
        \end{proposition}

        \subsubsection{Discrete probability distributions}
        
            Next, we present the numerical results for the computation of the optimal (entropy-regularized) transport plan between two discrete probability distributions using the near-optimal third-order method in Algorithm~\ref{alg:Delta}. We construct two discrete distributions as the mixture of three randomly generated Gaussian distributions, each on bounded support~$[-5,5]$ with~$n=100$. We select the regularization parameter to~$\gamma=0.1$, which is common for these applications. Figure~\ref{fig:OT_results} shows three examples of the resulting transport plan obtained by Algorithm~\ref{alg:Delta} for three different pairs of distributions, and the corresponding distributions are shown as the marginals of the transport plan. Figure~\ref{fig:OT_norms} shows the corresponding norms of the gradients, evaluated at each iteration of Algorithm~\ref{alg:Delta} for the three problems shown in Figure~\ref{fig:OT_results}.

            \begin{figure}[ht!]
            	\centering
            	\subfloat[]{%
            		\includegraphics[width=0.3\textwidth]{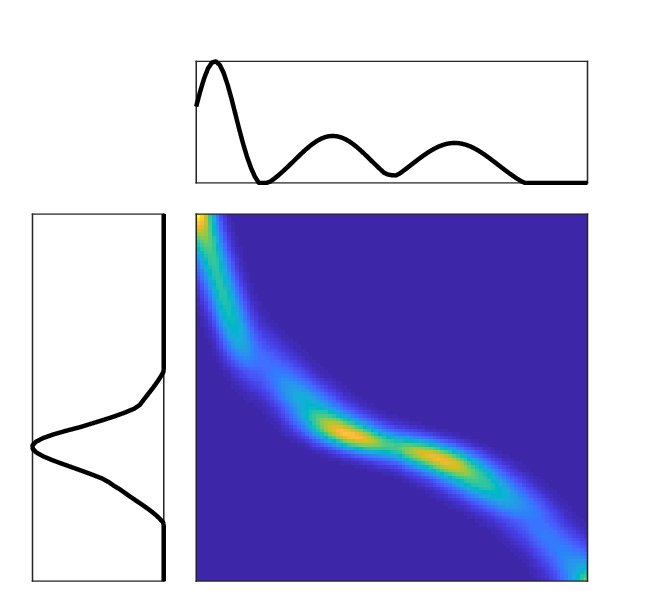}}
            	\subfloat[]{%
            		\includegraphics[width=0.3\textwidth]{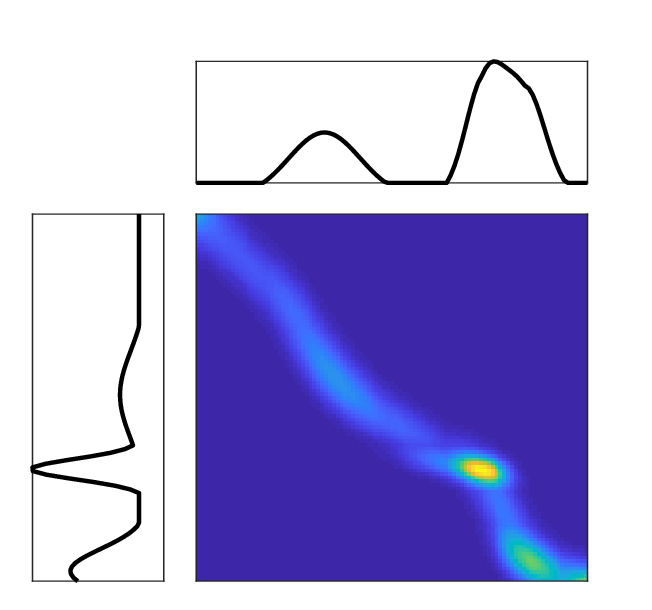}}
        		\subfloat[]{%
            		\includegraphics[width=0.3\textwidth]{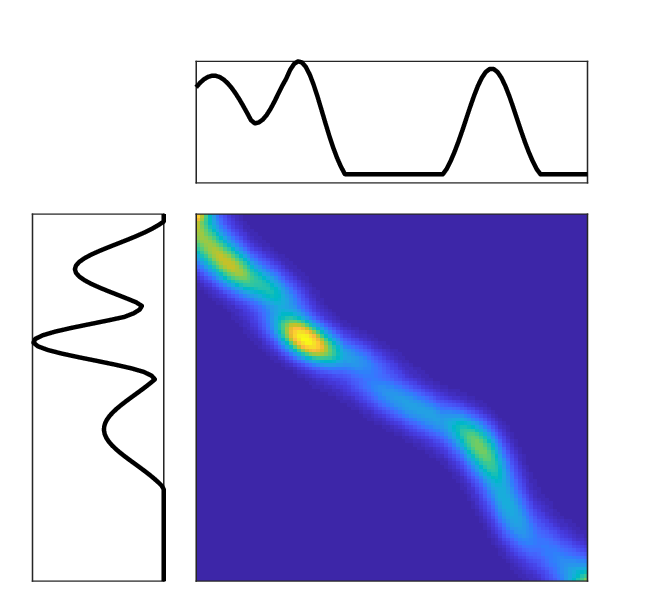}}
            	\caption{Three separate examples of the resulting transport plan obtained by Algorithm~\ref{alg:Delta}. The two distributions are shown on the left and top of the transport plan.} \label{fig:OT_results}
            \end{figure}
            
            \begin{figure}[ht!]
            	\centering
            	\subfloat[]{%
            		\includegraphics[width=0.3\textwidth]{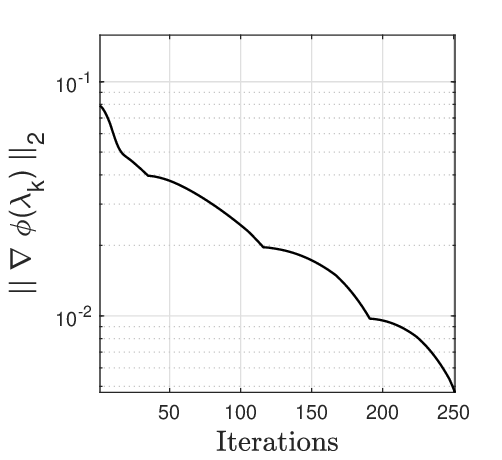}}
            	\subfloat[]{%
            		\includegraphics[width=0.3\textwidth]{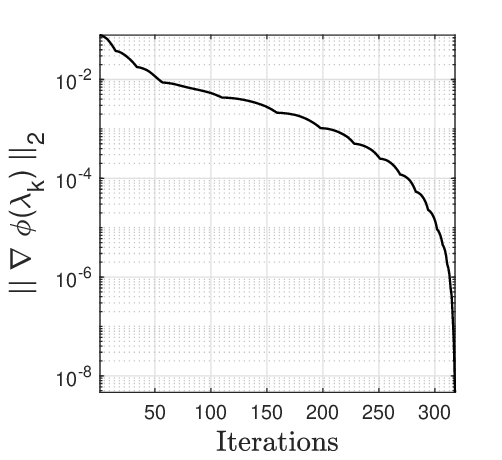}}
        		\subfloat[]{%
            		\includegraphics[width=0.3\textwidth]{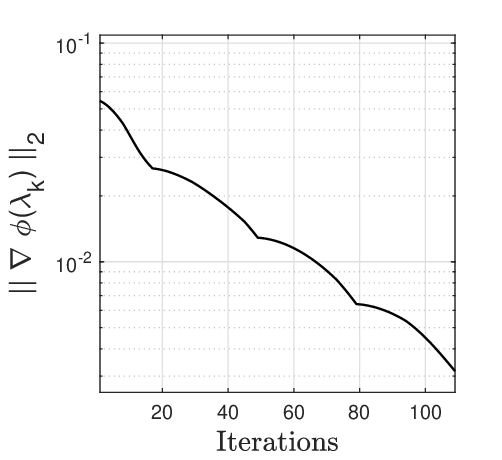}}
            	\caption{Norm of the gradient at each iteration of Algorithm~\ref{alg:Delta} for the three problems shown in Figure~\ref{fig:OT_results}.} \label{fig:OT_norms}
            \end{figure}

        \subsubsection{MNIST}\label{sec:mnist}
            Finally, we provide the results of experiments for transportation plan computation between two MNIST images of handwritten digits. Here we compare the results of Algorithm~\ref{alg:R} \po{(GN)} and Algorithm~\ref{Alg:PDATM} \po{(PDATM)}. 
            These methods represent two approaches to tackle this problem: gradient norm minimization of dual function and primal-dual method. 
            As we mentioned earlier, comparing these two approaches is our main motivation in this paper. 
            \po{Additionally, we provide the results of the Algorithm \ref{alg:MSN} (ATD) applied to dual function.
            In detail, we use it to minimize dual function until we achieve the prescribed $\e$-approximate solution for constraint and duality gap.
            In other words, we use it as a primal-dual method.}

            In this experiment, we take the images presented in Figure~\ref{fig:MNIST images} as initial histograms. The size of each picture is~$28 \times 28$ pixels. We reshape these images to vectors of size~$n = 28^2 = 784$.

            \po{To perform the inner ``tensor'' step \eqref{eq:tensor step} inside each of the considered algorithms, we use the method developed in~\cite{nesterov2021implementable}. We use its inexact modification from \cite{nesterov2021superfast} and look for the points from the following neighborhood:
            \begin{equation}\label{eq:inner problem neighborhood}
                \mathcal N_{p, M}^\alpha(x) \equiv \left\{ T \in \R^n: \|\nabla \Omega_{x, p, M}(T)\|_2 \le \alpha \| \nabla f(T) \|_2 \right\},
            \end{equation}
            where we choose the same size of the neighborhood as in \cite{nesterov2021superfast}: $\alpha = \frac{1}{2p} = \frac{1}{6}$.
            Thus, we run our inner-problem method until we reach the point $T \in \R^n$, inside the set defined in \eqref{eq:inner problem neighborhood}.
            }
            
            We select the regularization parameter~$\gamma = 0.5$ and Lipschitz constant~\po{$M_p = 0.5$}. We want to emphasize that we do not use theoretical estimation of~$M_p$ from Proposition~\ref{them:smax} because it gives too pessimistic upper bound, which results in too slow convergence. 
            \po{
            We start from zero and stop the optimization process for Algorithms \ref{Alg:PDATM} and \ref{alg:MSN} when both constraint and duality gap become smaller or equal than $\e = 0.001$.
            For Algorithm \ref{alg:R}, we stop when the norm of the gradient of dual function is less or equal than $\min\{\e; \frac{\e}{2R}\}$.
            To choose $R$, we use the following lemma.
            \begin{lemma}[Lemma 11 in \cite{guminov2021combination}]
                Let $M \in \R_+^{n \times n}$ be a transportation matrix, $p, q \in \Sigma_n$ be two histograms. Then, there exists a solution $(\xi^*, \eta^*)$ of \eqref{eq:OT-PD} such that
                \[
                    \|(\xi^*, \eta^*)\|_2 \le 
                    R := \sqrt{N / 2} \left( \|M\|_\infty - \frac{\gamma}{2} \ln \min_{i, j} \{p_i, q_j\} \right).
                \]
            \end{lemma}
            }
            \begin{figure}[t]
                \centering
                \subfloat[]{
                     \includegraphics[width=0.5\textwidth]{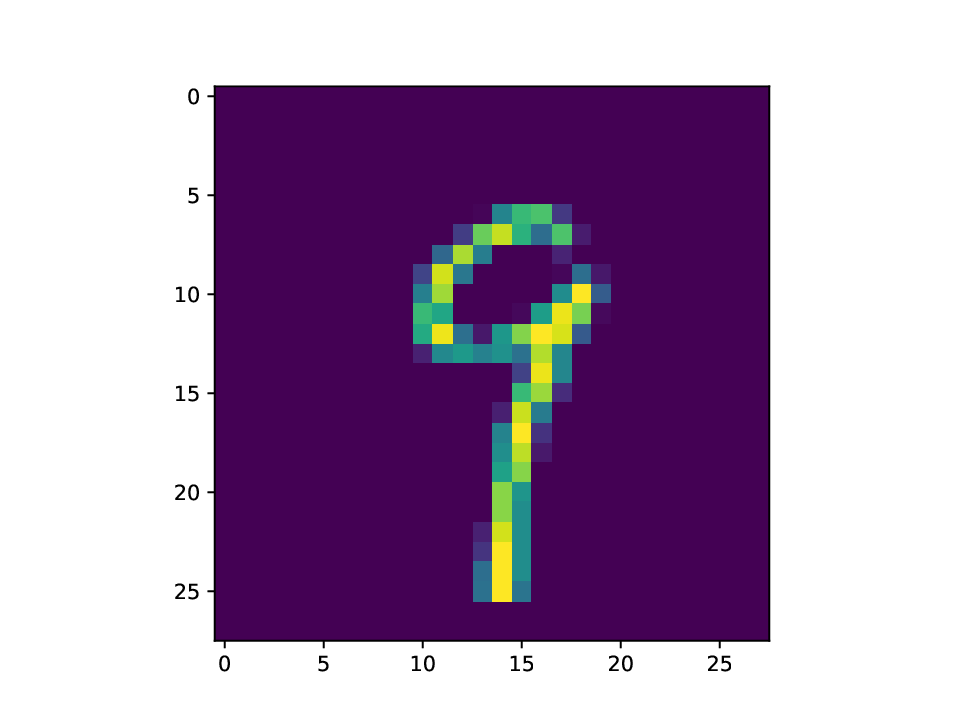}
                }
                \subfloat[]{
                     \includegraphics[width=0.5\textwidth]{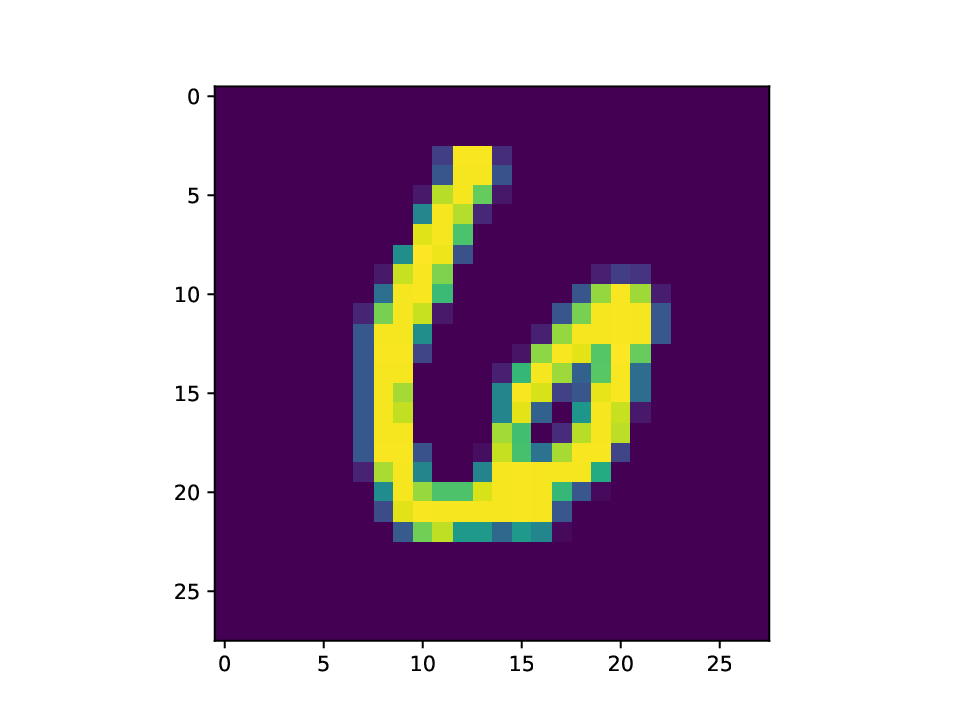}
                }
                \caption{Initial and target images for optimal transport problem}
                \label{fig:MNIST images}
            \end{figure}

            \begin{figure}[t]
                \centering
                \subfloat[]{
                    \includegraphics[width=0.5\linewidth]{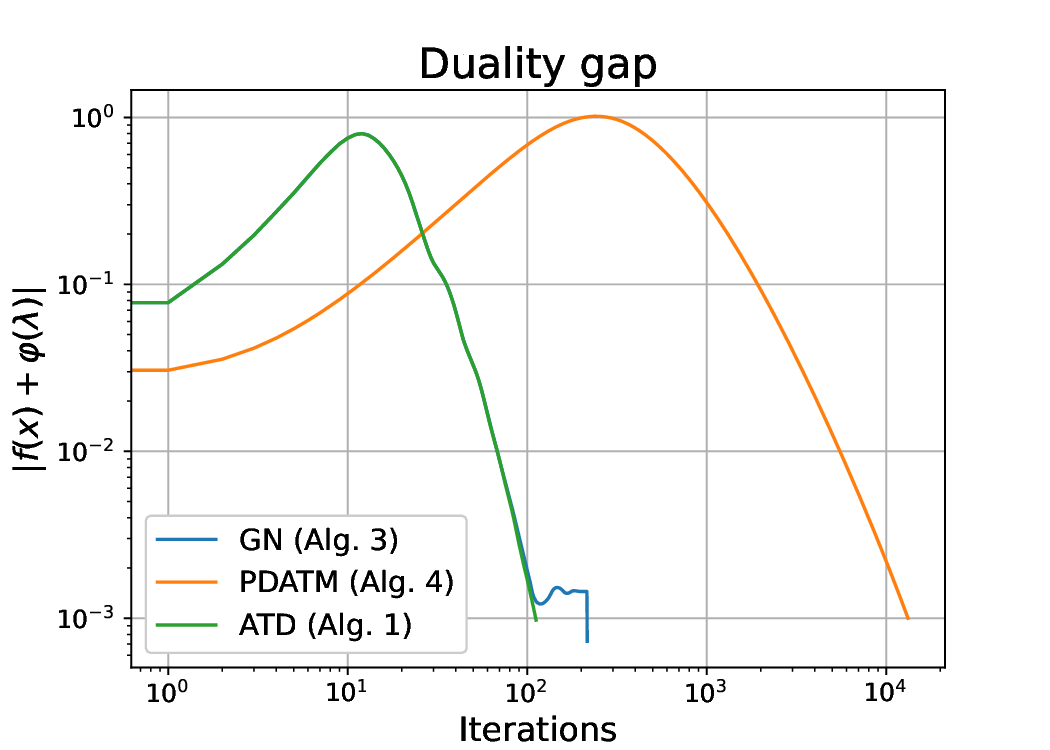}
                    \label{fig:ot dual gap}
                }
                \subfloat[]{
                     \includegraphics[width=0.5\textwidth]{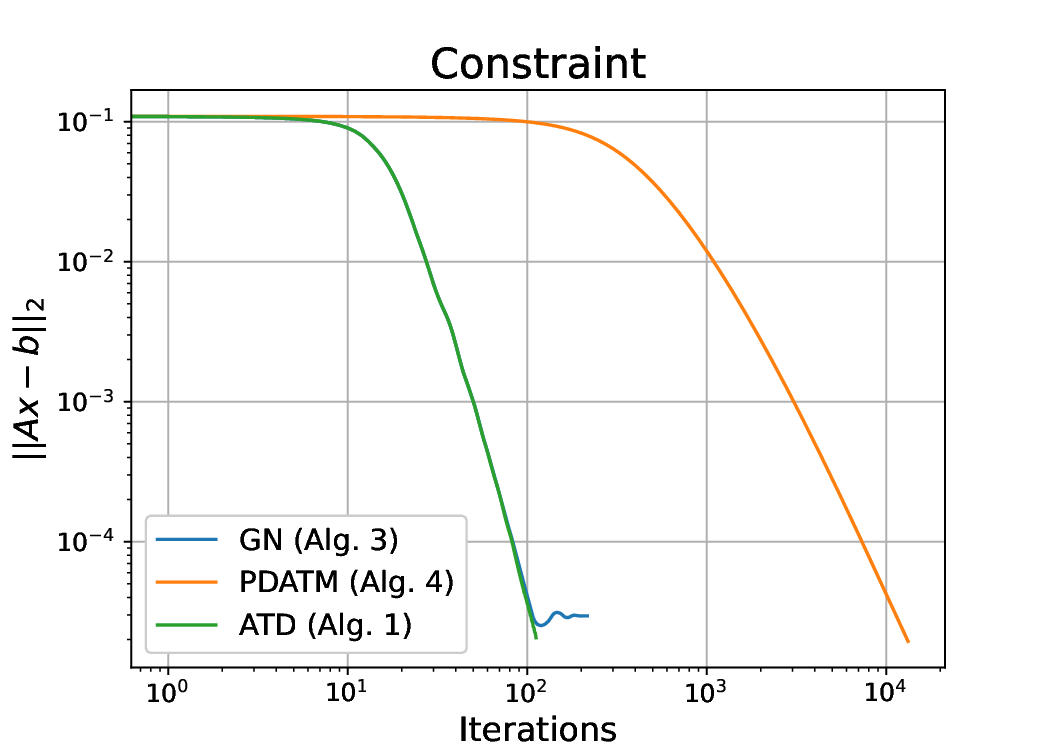}
                     \label{fig:ot constraint}
                }
                \caption{Duality gap and equality constraint convergence for Algorithms~\ref{alg:R}, ~\ref{Alg:PDATM} and \ref{alg:MSN}.}
                \label{fig:ot dual gap and constraint}
            \end{figure}

            \begin{figure}[ht!]
                \centering
                \subfloat[]{
                    \includegraphics[width=0.5\textwidth]{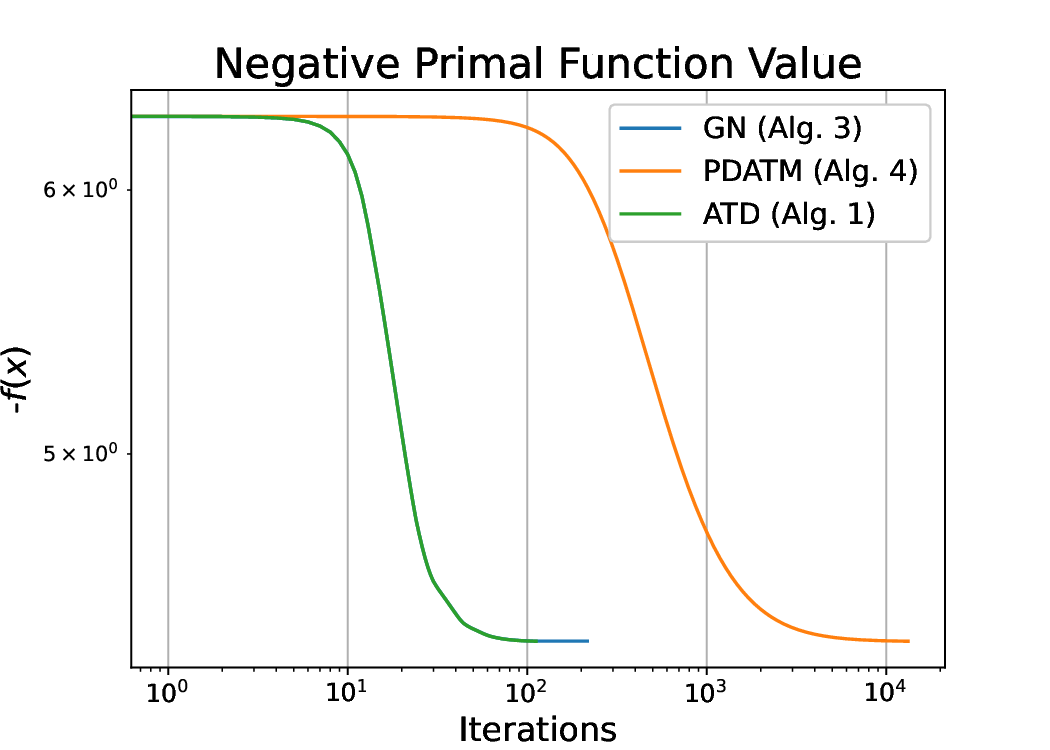}
                    \label{fig:f}
                }
                \subfloat[]{
                    \includegraphics[width=0.5\textwidth]{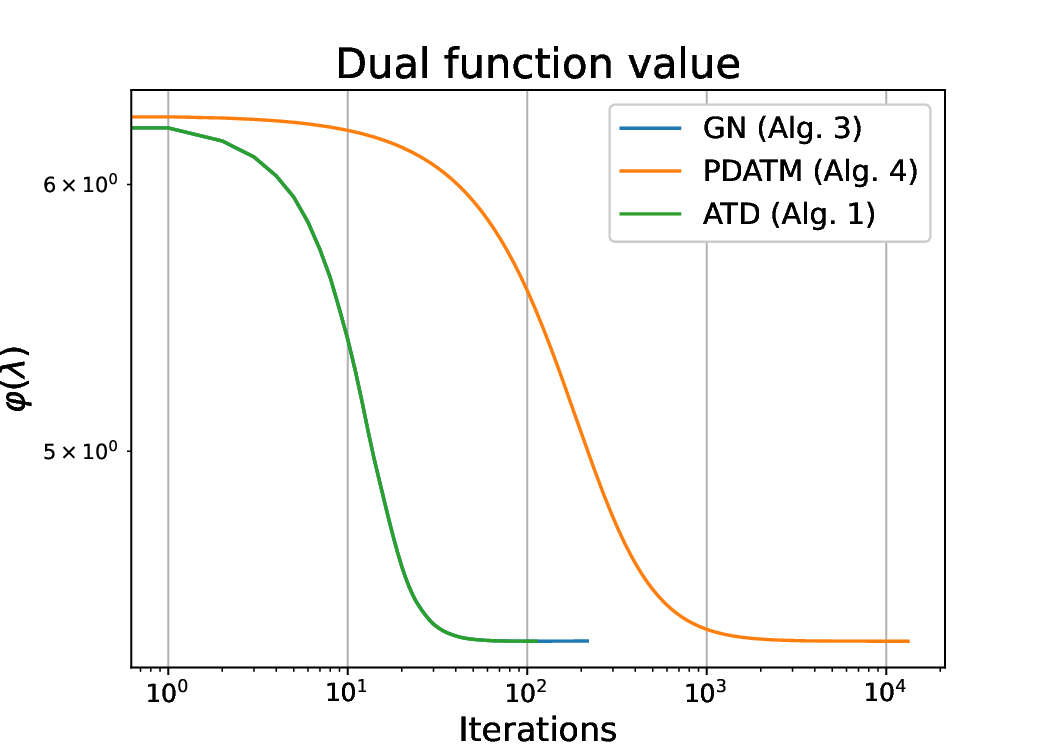}
                    \label{fig:phi}
                }
                \caption{Negative primal and dual function values for Algorithms~\ref{alg:R} and~\ref{Alg:PDATM}}
                \label{fig:f and phi}
            \end{figure}

            In Figure~\ref{fig:ot dual gap}, we show the duality gap convergence, and in Figure~\ref{fig:ot constraint}, we show the convergence results for equality constraints. Additionally, we provide the results for values of~$f(x)$ and~$\vp(\lambda)$ on Figure~\ref{fig:f and phi}. In Figure~\ref{fig:f}, we show a negative value of~$f(x)$ since otherwise, we could not plot it in a log scale.

            \po{
            At first, we compare the results of Algorithm \ref{alg:R} and Algorithm \ref{Alg:PDATM}, and then we discuss the performance of Algorithm \ref{alg:MSN}.
            All these figures show that Algorithm~\ref{alg:R} outperforms Algorithm~\ref{Alg:PDATM} in convergence speed. 
            The little spikes on Figures~\ref{fig:ot dual gap} and \ref{fig:ot constraint} at the end of the blue line are the result of restarts of Algorithm~\ref{alg:MSN} inside Algorithm~\ref{alg:R}. 
            The sharp drop at the end of the blue line is due to the final tensor step in Line 6 of Algorithm~\ref{alg:R}. 
            }


            We mentioned earlier, \po{and it is noticeable} from pseudocode, that Algorithm~\ref{Alg:PDATM} is a \po{\textit{direct}} method, which means that it does not use restarts, regularization, or binary search. These modifications introduce difficulties in implementing the method and may slow it down. For example, restarts usually give a too-pessimistic upper bound for the number of iterations of the inner method. Nevertheless, the Primal-Dual method needs only Lipschitz constant and estimation accuracy. Despite all these facts, it still loses to Algorithm~\ref{alg:R} both in practice and theory (see Remark~\ref{rem:pd and gd comparison}). 
            \po{
            Both Algorithms \ref{alg:R} and Algorithm \ref{Alg:PDATM} have a single hyperparameter -- Lipschitz constant estimation $M_p$. 
            But since it is more a characteristic of the problem than a particular algorithm hyperparameter, both algorithms should be the same. 
            We conducted several experiments where we compare Algorithm \ref{alg:R} and Algorithm \ref{Alg:PDATM} with the same parameter $M_p$ from the set $\{0.01, 0.1, 0.5, 1, 5\}$. 
            The overall picture is the same, and Algorithm \ref{alg:R} outperforms Algorithm \ref{Alg:PDATM}.
            Figure \ref{fig:ot dual gap and constraint} shows that Algorithm \ref{alg:R} restarts only in the end, which does not affect the overall result. 
            Since the theoretical estimate for the number of iterations, after which we should restart our algorithm, is usually too pessimistic, we tried to perform restarts manually after every 25 or 50 iterations of the inner algorithm.
            However, it only worsened the convergence of the whole Algorithm \ref{alg:R}.
            }

            \po{
            Finally, algorithm \ref{alg:MSN} behaves the same way as Algorithm \ref{alg:R}, and in the end, it converges even faster.
            The reason it covers in the restart condition $A_{N_k} \ge \frac{4}{\mu}$: Algorithm \ref{alg:R} chooses to restart when it has almost achieved the solution.
            Again, one can see it as little spikes at the end of the blue line.
            Since, in our case, $\mu = \frac{\e}{4R}$, we can not just directly change the value of $\mu$.
            We can do this only through $\e$ because $R$ is theoretically estimated and specific to chosen objective problem.
            However, both the increase and decrease of $\e$ in our experiments showed similar results.
            Thus, to address this issue, we conduct additional experiments on a strongly convex objective in the next subsection, where we can directly specify the constant of strong convexity $\mu$.
            }


    \po{
    \subsection{Minimal Mutual Information}\label{sec:mmi}
    }

        \po{
        The experiments for entropy regularized optimal transport were not representative of the performance of Algorithm \ref{alg:R} compared to Algorithm \ref{alg:MSN}; in this subsection, we look at the Minimal Mutual Information problem, which is strongly convex.
        This problem is defined as follows:
        \begin{equation}\label{eq:mmi}
             \min_{x \in \Sigma_n} \left\{ f(x):= \frac{L}{2}\|A x-b\|_2^2+ \mu \sum_{k=1}^n x_k \ln x_k \right\},
        \end{equation}
        where $\Sigma_n$ is $n$-dimensional standard simplex.}

        \po{
        If we consider this problem in different space $\{(x, z) | z = Ax, x \in \Sigma_n\}$, then it becomes optimization problem with linear equality constraints
         \begin{equation}\label{eq:mmi in (x, z) space}
             \min _{\substack{x \in \Sigma_n \\ z=A x}} \left\{ f(x, z) := \frac{L}{2}\|z - b\|_2^2 + \mu \sum_{k=1}^n x_k \ln x_k \right\}.
        \end{equation}
        The dual problem to \eqref{eq:mmi in (x, z) space} looks as follows
        \begin{equation}\label{eq:mmi dual}
            \min _{\lambda \in \mathbb{R}^m} \left\{ \vp(\lambda) := \mu \ln \left(\sum_{i=1}^n \exp \left(\frac{\left[-A^T \lambda\right]_i}{\mu}\right)\right)+\frac{1}{2 L}\left(\|\lambda+b\|_2^2-\|b\|_2^2\right) \right\}.
        \end{equation}
        }

        \po{
        Since, in this case, dual objective \eqref{eq:mmi dual} is initially strongly convex with constant $\frac{1}{L}$, we do not need additional regularization in Algorithm \ref{alg:R}, and we use $f_\mu \equiv \vp(\lambda),\ \mu = \frac{1}{L}$.
        We do not provide any additional analysis for the case when the objective of Algorithm \ref{alg:R} is initially strongly convex because the differences with proofs of Theorem \ref{thm:arg residual} are minor.
        The resulting convergence rates can be seen in Remark \ref{rem:extension to strongly convex case}.
        }

        \po{
        In our experiments we used dataset "housing" from \cite{CC01a}, scaled to $[-1, 1]$, which we then transfered to $[0, 1]$.
        We tested several values of $M_p$. 
        The overall picture was the same, but the convergence of all the algorithms was faster for smaller values of $M_p$, and the resulting picture was not so evident.
        Thus, we decided to choose $M_p = 100$.
        Additionally, we have tested several values of $L$ and $\mu$.
        Again, the overall picture was the same, but it took too long for some values to converge for all the methods.
        Thus, we chose $\mu = 1$, $L = 10$.
        We start from zeros and stop the optimization process when the approximation error achieves $\e = \e_f = \e_{eq}  = 0.01$.
        We derive an estimate of $R$ from strong convexity:
        \[
            \|\lambda^*\|_2 \le \frac{\|\nabla \vp(0) - \nabla \vp(\lambda^*)\|_2}{\mu} = \frac{\|\nabla \vp(0)\|_2}{\mu} = R.
        \]
        }

        \begin{figure}[ht!]
            \centering
            \subfloat[]{
                 \includegraphics[width=0.5\textwidth]{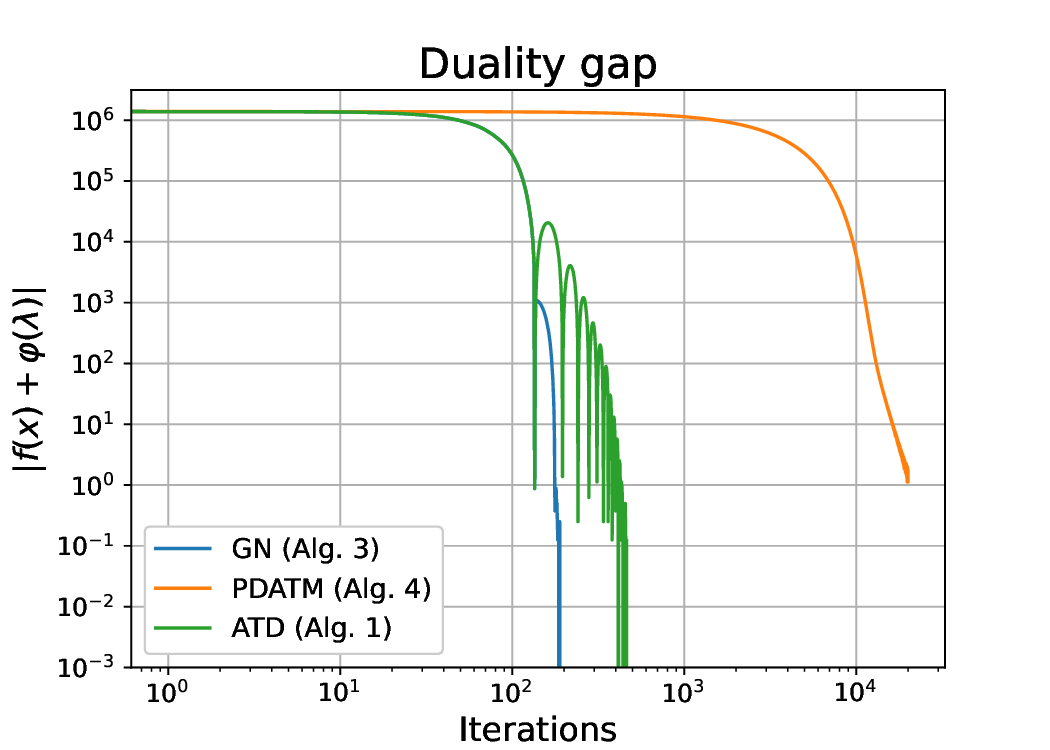}
                 \label{fig:mmi dual gap}
            }
            \subfloat[]{
                 \includegraphics[width=0.5\textwidth]{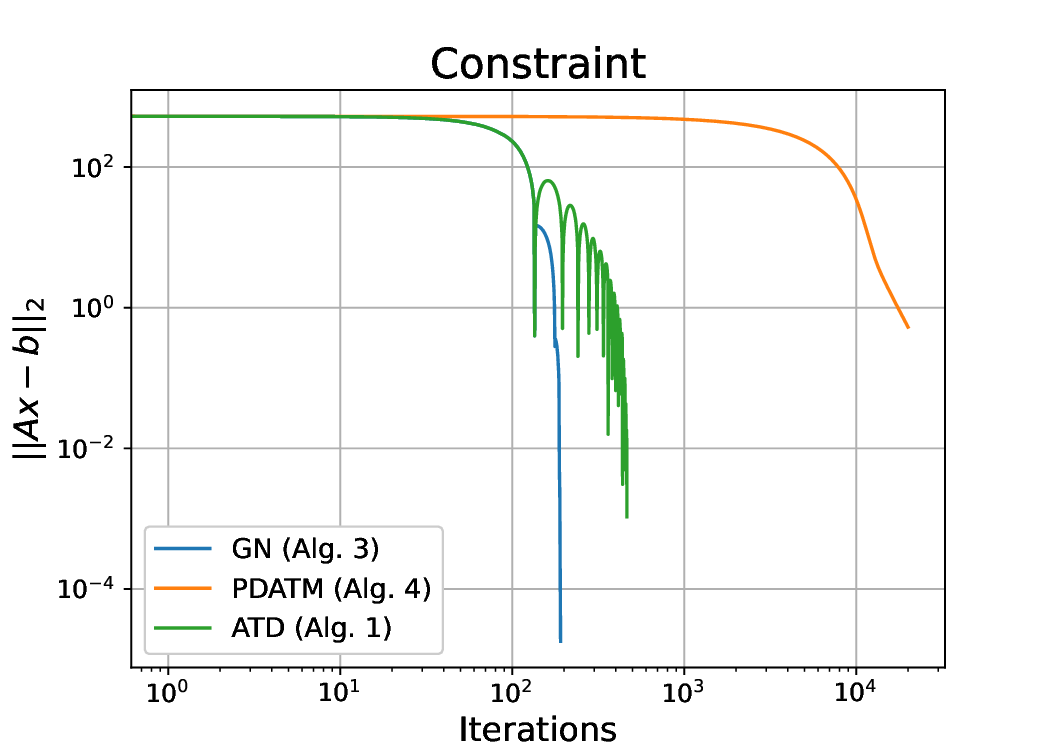}
                 \label{fig:mmi constraint}
            }
            \caption{\po{Duality gap and constraint convergence for Algorithms \ref{alg:R}, \ref{Alg:PDATM} and \ref{alg:MSN}}}
            \label{fig:mmi dual gap and constraint}
        \end{figure}

        \begin{figure}[ht!]
            \centering
            \subfloat[]{
                 \includegraphics[width=0.5\textwidth]{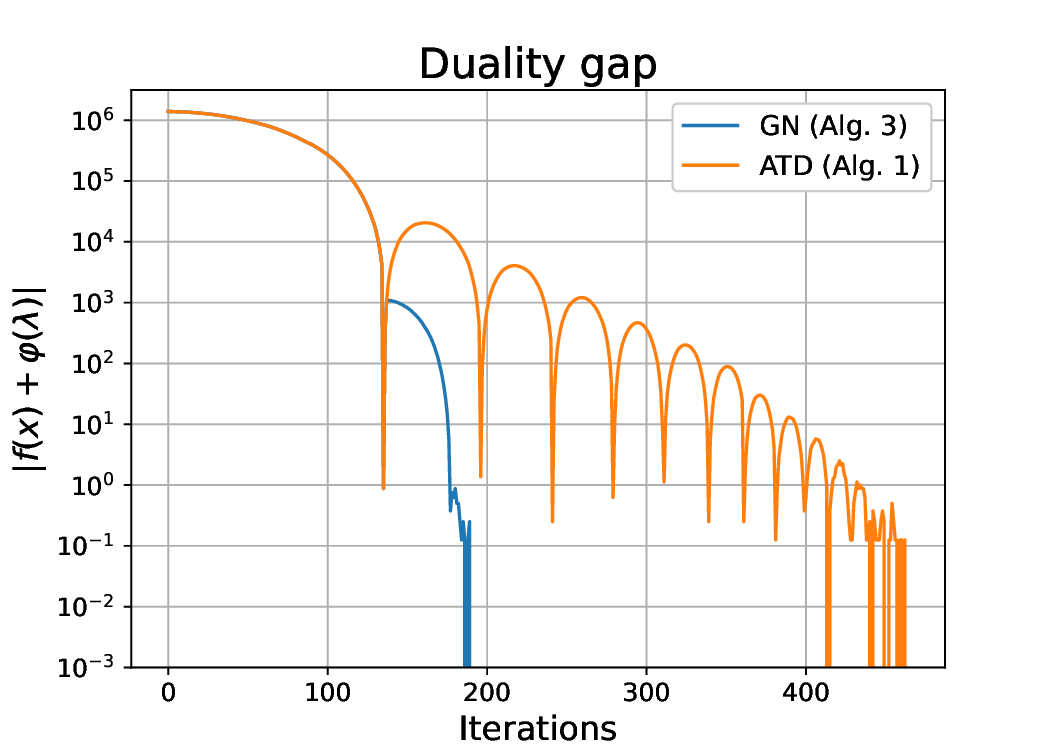}
                 \label{fig:mmi dual gap (gn vs tm)}
            }
            \subfloat[]{
                 \includegraphics[width=0.5\textwidth]{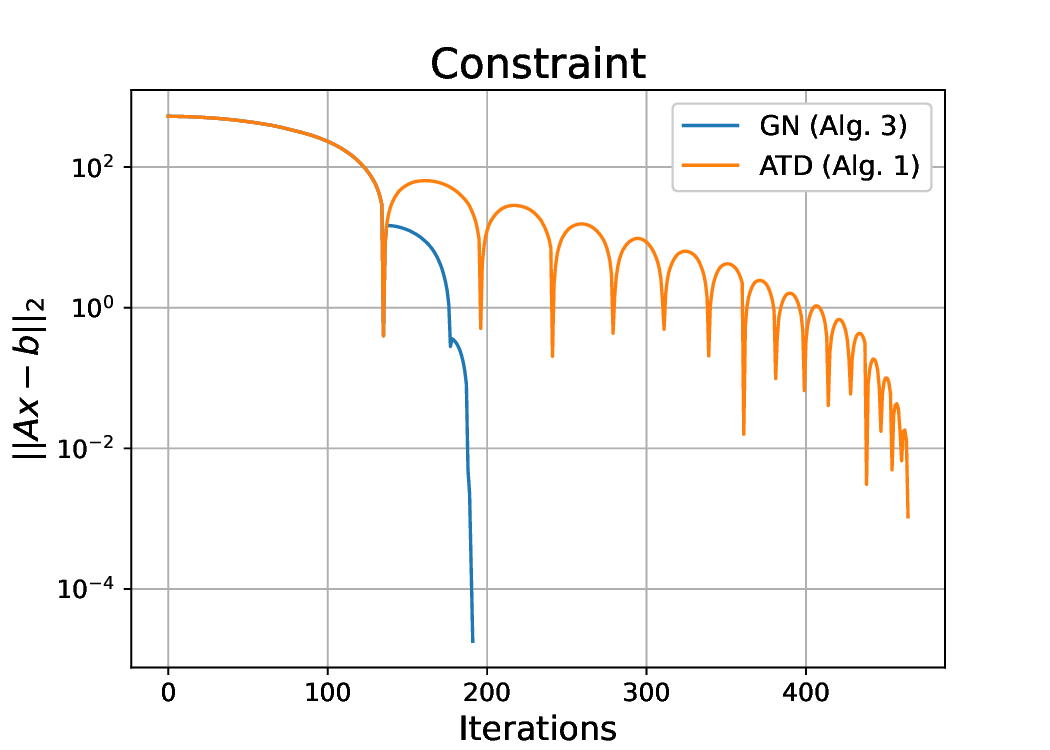}
                 \label{fig:mmi constraint (gn vs tm)}
            }
            \caption{\po{Duality gap and constraint convergence for Algorithms \ref{alg:R} and \ref{alg:MSN}}}
            \label{fig:mmi dual gap and constraint (gn vs tm)}
        \end{figure}
        
        \po{
        We show the comparison of Algorithms \ref{alg:R}, \ref{Alg:PDATM} and \ref{alg:MSN} in Figure \ref{fig:mmi dual gap and constraint}, and then take a closer look at Algorithms \ref{alg:R} and \ref{alg:MSN} in Figure \ref{fig:mmi dual gap and constraint (gn vs tm)}.
        Here Algorithm \ref{alg:MSN} minimizes dual function \eqref{eq:mmi dual} and recalculates the value of the primal function and equality constraint on every iteration until it reaches approximation error of $\e$ both for duality gap and constraint.
        }

        \po{
        Again, like in MNIST experiments, we can see that Algorithm \ref{alg:R} for gradient norm minimization (GN) outperforms Algorithm \ref{Alg:PDATM}, that is, primal-dual accelerated tensor method (PDATM).
        We limit the maximal number of steps for Algorithm \ref{Alg:PDATM} to 20000, so it stops when it reaches this limit and does not converge to the area of approximation error neither for dual gap (Figure \ref{fig:mmi dual gap}) nor for constraint (Figure \ref{fig:mmi constraint}).
        Thus, Algorithm \ref{alg:R} converges more than 100 times faster.
        }

        \po{
        Figure~\ref{fig:mmi dual gap and constraint (gn vs tm)} shows that Algorithm \ref{alg:MSN} (ATD) performs the same way as Algorithm \ref{alg:R} until the first restart.
        Then they both start jumping: Algorithm \ref{alg:R} due to restarts, and Algorithm \ref{alg:MSN} -- due to the nature of used acceleration.
        But, our proposed Algorithm \ref{alg:R} converges almost 2.5 times faster.
        This result shows that our proposed gradient norm minimization framework makes sense when it restarts early.
        }
            
        \po{
        \begin{remark}
            The above results show that Monteiro-Svaiter acceleration~\cite{monteiro2013accelerated}, which we used in Algorithm \ref{alg:R}, covers all the implementational drawbacks of Algorithm \ref{alg:R}, which results in better numerical convergence compared to Algorithm \ref{Alg:PDATM}.
            As we mentioned at the end of Remark~\ref{rem:extensions}, in Algorithm~\ref{Alg:PDATM}, we use Nesterov acceleration, which gives a worse convergence rate than Monteiro-Svaiter acceleration.
            Future work should consider a primal-dual method with Monteiro-Svaiter acceleration and compare its performance numerically with Algorithms \ref{alg:R} and \ref{Alg:PDATM}. This will make the Primal-Dual method not that straightforward because at least it will introduce additional linear search, which comes with Monteiro-Svaiter acceleration.
        \end{remark}
        }

        \po{
        \begin{remark}
            Comparison of results of Algorithms \ref{alg:R} and \ref{alg:MSN} on convex entropy-regularized optimal transport (Section \ref{sec:mnist}) and on strongly convex MMI problem (Section \ref{sec:mmi}) showed that it is beneficial to use Algorithm \ref{alg:R}, when $\mu$ is not too small.
            Otherwise, the first restart would be done when the method almost achieved the solution or would not be done at all.
            For example, in Section \ref{sec:mnist} we had $\e = 0.001$, $R \simeq 100\ \Rightarrow \mu = \e / (4R) \sim 10^{-5}$, and in Section \ref{sec:mmi} $\mu = 1 / L = 10^{-1}$.
            That is why the second case method restarts much earlier than it achieves its solution. 
        \end{remark}
        }
\section{Conclusions}\label{sec:conclusions}
    \po{
    This paper considers minimization problems with linear equality constraints.
    There are two ways to solve this type of problem: find a stationary point of dual function and reconstruct the primal solution or use the primal-dual method, which optimizes both primal and dual function simultaneously.
    We consider both approaches.
    Firstly, we propose two high-order methods for gradient norm minimization.
    These methods have optimal convergence rates up to multiplicative logarithmic factors.
    Secondly, we propose a high-order primal-dual accelerated tensor method that uses Nesterov's acceleration.
    Finally, we compare these two approaches with each other both in theory and in practice.
    Additionally, we numerically compare the proposed methods with the primal-dual version of the near-optimal tensor method for convex optimization.
    }

    
\section*{Funding}

    This work was supported by Ministry of Science and Higher Education grant No. 075-10-2021-068.
    The work by P. Dvurechensky was funded by the Deutsche Forschungsgemeinschaft (DFG, German Research Foundation) under Germany's Excellence Strategy - The Berlin Mathematics
Research Center MATH$^+$ (EXC-2046/1, project ID: 390685689).
    The work of C.A. Uribe is supported in part by the National Science Foundation under Grant No. 2211815 and No. 2213568.
\bibliographystyle{tfs}
\bibliography{references}

\end{document}